\newtheorem{theorem}{Theorem}
\newtheorem{corollary}[theorem]{Corollary}
\newtheorem{proposition}[theorem]{Proposition}
\theoremstyle{definition}
\newtheorem{fact}{Fact}[section]
\newtheorem{claim}{Claim}[section]
\newenvironment{proofofclaim}{{\itshape Proof of Claim.}}{\hfill\copyright}
\theoremstyle{remark}
\def\cf{{\mathrm{cf} \, }}
\def\rest{{\upharpoonright}}
\begin{document}

\title{Selectivity properties of spaces}

%%
%% Now edit the following to give your name and address:
%% 

\author{William Chen-Mertens}
\address{York University}
\email{chenwb@gmail.com}

\author{Paul J. Szeptycki}
\address{York University}
\email{szeptyck@yorku.ca}

\begin{abstract}
This paper addresses several questions of Feng, Gruenhage, and Shen which arose from Michael's theory of continuous selections from countable spaces. We construct an example of a space which is $L$-selective but not $\mathbb{Q}$-selective from $\mathfrak{d}=\omega_1$, and an $L$-selective space which is not selective for a $P$-point ultrafilter from the assumption of $\mathsf{CH}$. We also produce $\mathsf{ZFC}$ examples of Fr\'echet spaces where countable subsets are first countable which are not $L$-selective.
\end{abstract}

\subjclass[2010]{Primary 54A20; Secondary 03E57, 03E35}
\keywords{set-valued mapping, continuous selection, $L$-selective, $C$-selective, tight gap, Fr\'echet space}
\maketitle

\section{Introduction}

Suppose $X,Y$ are topological spaces and $\varphi:Y\rightarrow \mathcal{P}(X)\setminus \{\emptyset\}$ is a map. A general question investigated in detail by Michael asks under what conditions it is possible to find a continuous $s:Y\rightarrow X$ so that $s(y)\in \varphi(y)$ for all $y\in Y$. It is natural to require some kind of continuity assumption for $\varphi$. So let $\mathcal{F}(X)$ be the set of all nonempty closed subsets of $X$ equipped with the Vietoris topology generated by the sets
\begin{enumerate}
\item $\{A\in \mathcal{F}(X):A\cap W\neq \emptyset\}$
\item $\{A\in \mathcal{F}(X):A\subseteq W\}$
\end{enumerate}
where $W$ ranges through open subsets of $X$. A map $\varphi:Y\rightarrow \mathcal{F}(X)$ is called \emph{lower semicontinuous} if it is continuous with respect to open sets of the first kind, i.e., if for every nonempty open $W\subseteq X$, the set $\{y\in Y: \varphi(y)\cap W\neq\emptyset\}$ is open in $Y$.

A space $X$ is $Y$-\emph{selective} if for every lower semicontinuous $\varphi:Y\rightarrow \mathcal{F}(X)$ there is a continuous selection $s:Y\rightarrow X$ for $\varphi$. $X$ is $L$-\emph{selective} if it is $(\omega+1)$-selective, where $\omega+1$ is a convergent sequence. $X$ is $C$-\emph{selective} if it is $Y$-selective for any countable regular space $Y$.

Michael \cite{M1} proved that every first countable space is $C$-selective. In \cite{FGS}, Feng, Gruenhage and Shen improved this result:
\begin{fact}
\begin{itemize}
\item[]
\item Every GO-space is $C$-selective.
\item Every $W$-space is $C$-selective.
\end{itemize}
\end{fact}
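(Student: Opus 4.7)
The plan is to prove both parts by a recursion along an enumeration of the countable regular space $Y$, using in each case the structural hypothesis on $X$ to guide continuous choices from the closed sets $\varphi(y)$. A first reduction simplifies the problem: because $Y$ is countable regular and hence Fr\'echet, a function $s: Y \to X$ is continuous at $y$ if and only if $s(z_k) \to s(y)$ whenever $z_k \to y$ in $Y$. So the task reduces to defining $s$ on an enumeration $\{y_n : n < \omega\}$ of $Y$ in such a way that this sequential condition holds at every $y_n$.

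For the $W$-space case, $X$ being a $W$-space means that for each $x \in X$ there is a winning strategy $\sigma_x$ for player II in Gruenhage's $W$-game at $x$: player I plays a decreasing sequence of open neighborhoods $U_0 \supseteq U_1 \supseteq \cdots$ of $x$, player II responds with points $x_n \in U_n$, and player II wins when $x_n \to x$. I would process the $y_n$ in an order respecting the tree of convergent sequences in $Y$, and at each step pick $s(y_n) \in \varphi(y_n)$ so that if $y_n$ is a limit of a sequence $(y_{n_k})$ whose $s$-values are already assigned, then those values appear as responses of $\sigma_{s(y_n)}$ to a natural play by player I. Lower semicontinuity of $\varphi$ is exactly what makes such choices feasible, since for any open $U \ni s(y_n)$ the set $\{z : \varphi(z) \cap U \neq \emptyset\}$ is open in $Y$ and contains a tail of $(y_{n_k})$. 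The main obstacle is bookkeeping: a single $y_m$ may belong to many convergent sequences with different limits, and its chosen $s(y_m)$ must simultaneously be a legal play in all of the associated $W$-games, so the recursion must be arranged (e.g.\ along a tree of sequences, or by a fusion-style argument) to reconcile these demands.

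For the GO-space case, the linear order on $X$ plays the role of the $W$-strategy. The idea is that given a convex open neighborhood $V$ of an anticipated limit value, one can select a canonical element of $\varphi(z) \cap V$ whenever this intersection is nonempty, for example via an infimum or supremum along the order (or an endpoint when it is attained in $X$). Using that a GO-space has a base of convex open sets, I would run the same recursion as above, replacing ``player II's response'' by such an order-canonical choice inside a shrinking sequence of convex neighborhoods around each earlier-defined value, with lower semicontinuity again ensuring that the required intersections are nonempty along tails of sequences converging to $y_n$. The coordination of many sequences converging to the same point remains the hard part, but the linearity of the order lets conflicts be resolved deterministically, which is why a concrete strategy is unnecessary in the ordered setting.
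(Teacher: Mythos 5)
You should first note that the paper does not prove this Fact at all: it is quoted from \cite{FGS} without proof, so there is no in-paper argument to compare against. Judged on its own terms, your proposal has a fatal gap at its very first step. You reduce continuity of $s$ to sequential continuity on the grounds that a countable regular space is Fr\'echet. That is false, and the counterexamples are precisely the spaces this paper is about: for a free ultrafilter $U$, the space $Y_U$ is countable, regular and zero-dimensional, yet the only sequences converging to the non-isolated point $\infty$ are those eventually equal to $\infty$ (an injective sequence with range $E\subseteq\omega$ would need $E\subseteq^* A$ for every $A\in U$, which is impossible). Hence \emph{every} function on $Y_U$ is sequentially continuous, while continuity at $\infty$ is a genuinely restrictive condition --- indeed the paper constructs $L$-selective spaces that are not $U$-selective. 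Under your reduction every space would be vacuously $U$-selective, so the reduction cannot stand, and the entire architecture built on it (processing $Y$ along its tree of convergent sequences) never engages with the actual content of $C$-selectivity. A correct argument has to control $s^{-1}[W]$ for open $W\subseteq X$ directly, exploiting that a countable regular space is normal and has a base of clopen sets, rather than exploiting convergent sequences in $Y$.

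There are further problems even where sequences do suffice. You have the $W$-game set up with the wrong player: in Gruenhage's game the \emph{open-set} player has the winning strategy in a $W$-space (choosing neighborhoods $U_n$ of $x$ so that any points $x_n\in U_n$ the opponent picks must converge to $x$); with your convention the point player wins trivially by playing $x$ every round, so ``$W$-space'' as you state it is no hypothesis at all. Relatedly, you invoke the strategy backwards in time: you want the strategy based at the limit value $s(y_n)$ to constrain the \emph{later} choices $s(y_{n_k})$, but in your recursion those values are already fixed before $s(y_n)$ is chosen, and they cannot retroactively be made legal responses to a strategy at a point selected afterwards. The coordination of a single $y_m$ lying on many convergent sequences --- which you explicitly defer to unspecified ``bookkeeping'' or a ``fusion-style argument'' --- is exactly where the work of such a proof lives, so nothing is actually proved. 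Finally, in the GO-space case the proposed ``order-canonical choice'' via an infimum or supremum of $\varphi(z)\cap V$ need not exist in $X$ (GO-spaces are not order-complete) and, when it exists, need not belong to the closed set $\varphi(z)$, so even the deterministic selection rule is not well defined as stated.
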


For $L$-selective spaces, they showed:
\begin{fact}
Every $L$-selective space $X$ has the $\alpha_1$-property, i.e., for any point $x\in X$ and any countable family $\mathcal{S}$ of sequences converging to $x$, there is a single sequence converging to $x$ which contains all but finitely many elements of each member of $\mathcal{S}$. 
\end{fact}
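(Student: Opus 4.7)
Plan: Fix $x\in X$ and a countable family $\{S_n:n<\omega\}$ of sequences converging to $x$, written $S_n=\{x_{n,k}:k<\omega\}$; the goal is to produce a single sequence $T\to x$ with $|S_n\setminus T|<\infty$ for each $n$. The strategy is to encode $\{S_n\}$ as a lower semi-continuous map $\varphi:\omega+1\to\mathcal F(X)$ and recover $T$ from a continuous selection supplied by $L$-selectivity.

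A first attempt is to set $\varphi(\omega)=\{x\}$ and $\varphi(n)=S_n\cup\{x\}$, which is closed (a convergent sequence with its limit). Lower semi-continuity is immediate: for any open $W\subseteq X$, if $x\in W$ then every $\varphi(m)$ meets $W$ at $x$, so the preimage of the basic open set $\{A:A\cap W\neq\emptyset\}$ is all of $\omega+1$; if $x\notin W$, the preimage excludes $\omega$ and is automatically open among the isolated points. By $L$-selectivity a continuous selection $s$ exists with $s(n)\in S_n\cup\{x\}$ and $s(n)\to x$, but this contributes at most one point to $T\cap S_n$.

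To amplify the selection, I identify $\omega$ bijectively with $\omega\times\omega$ and take $\varphi(n,k)$ to be a carefully chosen finite subset of $\bigcup_j S_j$ encoding $x_{n,k}$, so that any continuous selection $s$ produces a sequence $T:=\{s(m):m<\omega\}$ converging to $x$ and containing cofinitely many elements of each $S_n$. The combinatorial structure of the enumeration and of the finite sets $\varphi(m)$ will be designed precisely to pin down the required cofinite traces.

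The main obstacle is to avoid the trivial constant selection $s\equiv x$: so long as $x\in\varphi(m)$ for every $m$, this selection is valid and extracts no information. To prevent it, the sets $\varphi(m)$ must be closed in $X$ with $x\notin\varphi(m)$; but any infinite subset of some $S_k$ accumulates at $x$, so each $\varphi(m)$ must be a \emph{finite} subset of $\bigcup_j S_j$. The heart of the construction is to choose these finite sets so that $\varphi$ remains lower semi-continuous (every open neighborhood of $x$ meets $\varphi(m)$ for cofinitely many $m$) while forcing every continuous selection to visit cofinitely many elements of each $S_n$. Once such a $\varphi$ is in place, the resulting $T$ is the sought $\alpha_1$-sequence.
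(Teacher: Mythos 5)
There is a genuine gap: the proposal stops exactly where the proof begins. Your first attempt ($\varphi(n)=S_n\cup\{x\}$) is correctly discarded, and you correctly identify that the values of $\varphi$ must be finite closed sets omitting $x$; but the ``carefully chosen finite subsets'' whose ``combinatorial structure \ldots will be designed precisely to pin down the required cofinite traces'' are never exhibited, and that construction is the entire content of the statement. Worse, the direct strategy you describe faces a structural obstruction you do not address. A continuous selection contributes one point per $m$, so to force $T\supseteq^* S_n$ for every $n$ you must force the selection's hand at cofinitely many indices. The only way to force $s(m)=x_{n,k}$ outright is to take $\varphi(m)=\{x_{n,k}\}$, but then lower semicontinuity fails: in $S_\omega$, for instance, with $S_n$ the $n$th spine and $\varphi(\langle n,k\rangle)=\{x_{n,k}\}$, the neighborhood $W_f$ misses infinitely many of the sets $\varphi(m)$ even though $\infty\in\varphi(\omega)\cap W_f$. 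If instead each $\varphi(m)$ has at least two points (the redundancy that makes lower semicontinuity achievable), the selection acquires the freedom to evade: e.g.\ if each $\varphi(m)$ contains a point of $S_0$ with index tending to infinity, the selection that always picks that point is continuous and its range misses $S_1$ entirely. So ``every continuous selection yields an $\alpha_1$-witness'' is not something you can arrange this way.

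The argument this paper relies on (Proposition 3.2 of Feng--Gruenhage--Shen, as described in Section 2 above) runs in the opposite, contrapositive direction: assuming $X$ is \emph{not} $\alpha_1$ at $x$, one uses the nonexistence of an $\alpha_1$-witness as an input to build a lower semicontinuous $\varphi:\omega+1\rightarrow\mathcal{F}(X)$ whose values are \emph{pairs} of points and which admits \emph{no} continuous selection --- every candidate selection either fails to converge or would produce, from its range, a convergent sequence almost containing each $S_n$, contradicting the assumption. That is the missing idea: the failure of $\alpha_1$ is what defeats all selections; one does not try to force all selections of some fixed $\varphi$ to produce a witness. You should either reproduce that contrapositive construction or supply, concretely, the finite sets $\varphi(m)$ together with proofs of both lower semicontinuity and the claim that every continuous selection covers each $S_n$ cofinitely --- and the obstruction above indicates the latter cannot be done as sketched.
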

And furthermore, it is not difficult to show that if $X$ is $L$-selective, and $A\subseteq X$ is countable with $x\in\overline{A}$, then there is $B\subseteq A$ converging to $x$ (a slight weakening of the Fr\'echet property).

The present work addresses several questions of \cite{FGS}. They asked whether it is consistent that every $L$-selective space is $C$-selective, and constructed an example of an $L$-selective but not $\mathbb{Q}$-selective space under the assumption $\mathfrak{p}=\mathfrak{c}$. In Section \ref{lncscale}, we construct an example from $\mathfrak{d}=\omega_1$. In Section \ref{lncgap} we construct an $L$-selective space which is not selective for a $P$-point ultrafilter from the assumption of $\mathsf{CH}$. This construction uses the notion of a tight gap in $[\omega]^\omega$.

The authors of \cite{FGS} also asked whether it is consistent that every Fr\'echet space in which countable subspaces are first countable (CFC) is $L$-selective. This question was motivated on two fronts. Firstly, every GO-space and $W$-space is CFC; and secondly, there is a model of Dow and Steprans \cite{DS} where every countable Fr\'echet $\alpha_1$-space is first countable and hence every $L$-selective space is CFC. They produced an example of a Fr\'echet CFC space which is not $L$-selective from $\mathfrak{p}=\omega_1$. In Section \ref{cfcfnl}, we modify their example to waive the cardinal invariant assumption and provide a negative answer. Moreover, from an Aronszajn tree, we produce an example of size and character $\aleph_1$.

This work was initiated following G. Gruenhage's presentation at the 53rd Spring Topology and Dynamics Conference at the University of Alabama at Birmingham. The first named author would like to thank the organizers of the conference and acknowledges a travel award through NSF grant 1900727 that made possible his participation. The second author acknowledges support from NSERC Discovery Grant 2019-06356.

\section{Spaces obtained from an ultrafilter}
To investigate the problem of constructing an $L$-selective not $C$-selective space, it is helpful to have on hand some candidates for countable regular spaces $Y$ so that the space constructed is not $Y$-selective. A class of particularly simple spaces are those with a single nonisolated point. The collection of neighborhoods of the nonisolated point form a filter on the countable set of isolated points, so we call this space a \emph{filter space}. Given a filter $F$ on $\omega$, let $Y_F$ be the space with underlying set $\omega+1$ so that the points in $\omega$ are isolated and the neighborhoods of the point $\omega$ (which we call $\infty$ to avoid confusion with the set of isolated points) are given by $F$. We will use the abbreviation $F$-\emph{selective} to denote the property of being $Y_F$-selective.

To satisfy $F$-selectivity, it suffices to consider only those lower semicontinuous functions $\varphi$ so that $\varphi(\infty)$ is a singleton. To see this, take any lower semicontinuous function $\varphi:Y_U\rightarrow \mathcal{F}(X)$ and let $x\in \varphi(\infty)$. The function $\tilde{\varphi}$ defined so that $\tilde{\varphi}(n)=\varphi(n)$ for $n<\omega$ and $\tilde{\varphi}(\infty)=\{x\}$ remains lower semicontinuous, and a selection for $\tilde{\varphi}$ is also a selection for $\varphi$.

If $F$ and $G$ are filters on $\omega$, then a map $f:\omega\rightarrow \omega$ extends to a continuous map $\hat{f}:Y_F\rightarrow Y_G$ so that $\hat{f}(\infty)=\infty$ precisely when $f$ is a \emph{Kat\v{e}tov reduction}, i.e., $f^{-1}[B]\in F$ for all $B\in G$, and we write $G\le_K F$. The extension $\hat{f}$ is a quotient map precisely when $f$ is a \emph{Rudin--Keisler reduction}, i.e., $f^{-1}[B]\in F$ if and only if $B\in G$, and we write $G\le_{RK} F$. The Rudin-Keisler and Kat\v{e}tov orders coincide on the collection of ultrafilters. 

If $U$ is an ultrafilter, then being $U$-selective can be interpreted in terms of the ultrapower. Let $M=\mathrm{Ult}(H(\theta),U)$ and $j:H(\theta)\rightarrow M$ be the ultrapower embedding, where $\theta$ is a regular cardinal sufficiently large so that $H(\theta)$ includes all involved topological spaces and their subsets. A lower semicontinuous function $\varphi:Y_U\rightarrow \mathcal{F}(X)$ represents a closed set $[\varphi]$ in $j(X)$ so that for every open $W$ intersecting $\varphi(\infty)$, $M\Vdash j(W)\cap [\varphi]\neq \emptyset$. A continuous selection is a point in $[\varphi]\cap \bigcap_W j(W)$, where $W$ ranges over all open sets in $V$ intersecting $\varphi(\infty)$.

The next proposition shows that lower semicontinuous functions from an ultrafilter space whose range consists of bounded finite sets admits a selection. It is motivated by the proof of Proposition 3.2 in \cite{FGS} that $L$-selective spaces are $\alpha_1$. In that proof, any space $X$ which is not $\alpha_1$ is shown to admit a lower semicontinuous function mapping $\omega+1$ (with the ordinal topology) to pairs of elements of $X$.

\begin{proposition}
For every space $X$ and ultrafilter $U$, if $\varphi:Y_U\rightarrow \mathcal{F}(X)$ is lower semicontinuous and $\sup_{n<\omega} |\varphi(n)|<\omega$, then $\varphi$ has a continuous selection.
\end{proposition}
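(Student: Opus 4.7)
The plan is to use finiteness of each $|\varphi(n)|$ to list the values of $\varphi$ as boundedly many partial selections, and then a finite pigeonhole inside $U$ will pick out one of them that converges to $x := \varphi(\infty)$.

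First I would reduce to the case $\varphi(\infty) = \{x\}$ (as noted in the discussion preceding the proposition) and, using that $|\varphi(n)|$ takes only finitely many values while $U$ is an ultrafilter, reduce further to the case that $|\varphi(n)| = k^*$ is constant on some set $A \in U$. Then for each $n \in A$ I would fix an arbitrary enumeration $\varphi(n) = \{s_0(n), \ldots, s_{k^*-1}(n)\}$, producing $k^*$ partial selections $s_i : A \to X$.

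For each open neighborhood $W$ of $x$, set $I_W := \{i < k^* : \{n \in A : s_i(n) \in W\} \in U\}$. Lower semicontinuity says $\{n : \varphi(n) \cap W \neq \emptyset\} \in U$, and the ultrafilter property forces $I_W$ to be nonempty: otherwise each of the finitely many sets $\{n \in A : s_i(n) \notin W\}$ lies in $U$, and their intersection witnesses $\{n : \varphi(n) \cap W = \emptyset\} \in U$, contradicting lower semicontinuity. Since $I_{W_1 \cap W_2} \subseteq I_{W_1} \cap I_{W_2}$, the family $\{I_W\}_{W \ni x}$ is downward directed among nonempty subsets of the finite set $\{0, \ldots, k^*-1\}$, hence has nonempty total intersection $I^*$. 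For any $i^* \in I^*$ the partial selection $s_{i^*}$ converges to $x$ along $U$; extending arbitrarily on $\omega \setminus A$ and setting $s(\infty) = x$ yields the desired continuous selection.

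The only conceptual subtlety is that one cannot in general pick a single coordinate that works for all $W$ by treating each $W$ in isolation; the finite downward directedness is what permits passage to a uniform $i^*$, and this is precisely where the hypothesis $\sup_n |\varphi(n)| < \omega$ is used.
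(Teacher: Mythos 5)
Your proof is correct and follows essentially the same approach as the paper's: enumerate each finite $\varphi(n)$ by boundedly many coordinates and use the ultrafilter's closure under finite intersections to extract one coordinate giving a convergent partial selection. The only cosmetic differences are that you first stabilize $|\varphi(n)|$ on a set in $U$ (which tidies up the fact that the $k$-th element is undefined when $|\varphi(n)|\le k$) and you reach the common coordinate via nonemptiness of each $I_W$ plus downward directedness, where the paper argues directly that an empty total intersection would yield finitely many neighborhoods $W_k$ whose intersection violates lower semicontinuity.
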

\begin{proof}
For each $n$, enumerate $\varphi(n)=\{x_{n,0},x_{n,1},\ldots,x_{n,k_n}\}$. Denote $\sup_n |\varphi(n)|$ by $N$. Pick $x\in \varphi(\infty)$. For each open $W$ neighborhood of $x$, let $X_W$ be the set of $k\le N$ so that $\{n:x_{n,k}\in W\}\in U$. 

We claim that $\bigcap_W X_W\neq \emptyset$. Otherwise, for each $k\le N$, there is some $W_k$ so that $\{n:x_{n,k}\in W_k\}\not\in U$. Now $\bigcap_{k\le N} W_k$ is an open neighborhood of $x$, but $\{n:\varphi(n)\cap \bigcap_{k\le N} W_k\neq\emptyset\}\not\in U$, contradicting lower semicontinuity.
\end{proof}

Let $S_\omega$ be the space on $(\omega\times\omega)\cup\{\infty\}$, thought of as $\omega$-many spines $\{n\}\times \omega$, $n<\omega$, where neighborhoods of $\infty$ are those sets which are cofinite in each spine. In other words, a neighborhood basis for $\infty$ is given by cofinite sets and open sets $W_f$, $f\in {}^\omega\omega$, where $W_f=\{\infty\}\cup \{(m,n):n>f(m)\}$. It is a typical example of a space which is Fr\'echet but not $\alpha_1$, and hence it is not $L$-selective.

However, we will show that $S_\omega$ is $U$-selective for $P$-point ultrafilters $U$. Recall that $U$ is a \emph{$P$-point ultrafilter} if every function $\omega\rightarrow \omega$ is either constant or finite-to-one on a set in $U$. Alternatively, for any partition of $\omega$ into subsets $P_n$, $n<\omega$, where $P_n\not\in U$, there are finite sets $p_n\subseteq P_n$ for each $n$ so that $\bigcup_n p_n\in U$. This notion occurs in several different places here, and seems to be important for this study.

\begin{proposition}
For every $P$-point ultrafilter $U$, $S_\omega$ is $U$-selective.
\end{proposition}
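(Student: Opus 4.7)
The plan is to first apply the reduction noted just before the preceding proposition so that $\varphi(\infty) = \{\infty\}$. If $\{k : \infty \in \varphi(k)\} \in U$ then the constant selection $s \equiv \infty$ works; otherwise I may assume $\varphi(k) \subseteq \omega \times \omega$ for every $k$. Since each $\varphi(k)$ is then closed in $S_\omega$ and omits $\infty$, the intersection $\varphi(k) \cap (\{m\} \times \omega)$ is finite for every $m$, and I write $F_k(m)$ for its maximum second coordinate when this column is non-empty.

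The main dichotomy concerns the column functions $g_m(k) := F_k(m)$. In the first case, some $g_{m_0}$ is unbounded modulo $U$, and then $s(k) := (m_0, F_k(m_0))$ is an immediate continuous selection, since $\{k : s(k) \in W_f\} = \{k : F_k(m_0) > f(m_0)\} \in U$ for every $f \in \omega^\omega$ by the ultrafilter dichotomy. Otherwise, every $g_m$ is bounded modulo $U$ by some $N_m$, witnessed by $B_m := \{k : F_k(m) \leq N_m\} \in U$; writing $N$ for the function $m \mapsto N_m$, the $P$-point hypothesis supplies a pseudointersection $B \in U$ with $B \setminus B_m$ finite for every $m$. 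Lower semicontinuity applied to $W_N$ makes $E_k := \{m : F_k(m) > N_m\}$ non-empty on a $U$-subset $D \subseteq B$. The key structural observation is that any selector $m : D \to \omega$ with $m(k) \in E_k$ is finite-to-one, because its fiber $\{k \in D : m(k) = m_0\}$ is contained in the finite set $B \setminus B_{m_0}$; hence $m(k) \to \infty$ along $U$. To ensure the second coordinate is large, I would choose $m(k) \in E_k$ maximizing the excess $F_k(m) - N_m$; applying LSC to $W_{N+T}$ for $T \in \omega$ shows that this maximum excess $\text{xs}(k) := F_k(m(k)) - N_{m(k)}$ tends to $\infty$ along $U$. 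The candidate selection is $s(k) := (m(k), F_k(m(k)))$ on $D$, extended arbitrarily elsewhere.

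The hard part will be verifying that $s$ converges to $\infty$ along $U$ for every $f \in \omega^\omega$, not just those with $f \leq N$ pointwise (for which $f(m(k)) \leq N_{m(k)} < F_k(m(k))$ is immediate). For arbitrary $f$, applying LSC to $W_{\max(f,N)}$ yields on a $U$-set an index $m^*(k) \in E_k$ with $F_k(m^*(k)) > \max(f,N)(m^*(k))$, and the argmax property of $m(k)$ then gives $\text{xs}(k) \geq F_k(m^*(k)) - N_{m^*(k)} > (f(m^*(k)) - N_{m^*(k)})^+$. The delicate step is converting this inequality at the $f$-dependent index $m^*(k)$ into the required $F_k(m(k)) > f(m(k))$ at the universal index $m(k)$. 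I expect to handle this by analyzing the pushforward ultrafilter $m_*(U)$, which is again a $P$-point: if the set $S_f := \{m : f(m) > N_m\}$ does not belong to $m_*(U)$, then $f(m(k)) \leq N_{m(k)} < F_k(m(k))$ on a $U$-set and we are done; if it does, the $P$-point property of $m_*(U)$ applied to $f - N$ should provide the uniform control needed to close the gap, possibly after refining the tie-breaking rule in the definition of $m(k)$.
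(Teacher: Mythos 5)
Your Case 1 (some column $m_0$ with $F_k(m_0)$ unbounded modulo $U$) is correct and matches the first half of the paper's argument, where $[\varphi]$ contains a point $(n,\beta)$ with $n$ finite and $\beta$ infinite. The fatal problem is in Case 2: there you try to \emph{build} a continuous selection $s(k)=(m(k),F_k(m(k)))$ whose first coordinate $m(k)$ is, as you yourself prove, finite-to-one on a $U$-set $D$. No such $s$ can converge to $\infty$ along $U$. Indeed, define $f(m)=\max\{F_k(m(k)) : k\in D,\ m(k)=m\}$ (a finite maximum, precisely because the fibers of $m$ on $D$ are finite); then $s(k)\notin W_f$ for every $k\in D$, so $\{k:s(k)\in W_f\}\notin U$. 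This is exactly the diagonalization in the paper's Claim showing that points on ``infinite spines'' of $j(S_\omega)$ never lie in $\bigcap_W j(W)$. So the step you flag as ``delicate'' is not merely delicate: the inequality $F_k(m(k))>f(m(k))$ on a $U$-set must \emph{fail} for this particular $f$, and no tie-breaking rule or analysis of the pushforward ultrafilter $m_*(U)$ can rescue it. More generally, by the $P$-point dichotomy the first coordinate of any selection is either constant or finite-to-one on a $U$-set, and the finite-to-one alternative is always killed by the diagonal $f$ above; hence a continuous selection exists \emph{only} in your Case 1.

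The missing idea is that Case 2 should end in a contradiction with lower semicontinuity rather than in a construction: one must produce a single $f$ with $\{k:\varphi(k)\cap W_f=\emptyset\}\in U$. Your ingredients almost suffice for this. Using the $P$-point pseudo-intersection $B\in U$ of the (WLOG decreasing) sets $B_m$, the quantity $r(k)=\max\{m:k\in B_m\}$ tends to infinity on $B$, so $\{k\in B:r(k)<m\}$ is finite for each $m$; setting $f(m)=\max\bigl(N_m,\ \max\{F_k(m):k\in B,\ r(k)<m\}\bigr)+1$ gives $\varphi(k)\cap W_f=\emptyset$ for all $k\in B$, contradicting lower semicontinuity. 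This is the combinatorial content of the paper's final paragraph (the function $g'$ dominating $h$ and the per-point bounds $g_n$), phrased there in the ultrapower as $g(\alpha)<j(g')(\alpha)$ for infinite $\alpha$. As written, your proposal does not contain this step and cannot be completed along the route it proposes.
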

\begin{proof}
Suppose $\varphi:Y_U\rightarrow \mathcal{F}(S_\omega)$ is a lower semicontinuous function. We may assume that $\varphi(\infty)=\{\infty\}$.

 Let $M=\mathrm{Ult}(H(\theta),U)$ and $j:H(\theta)\rightarrow M$ be the ultrapower embedding. In the ultrapower, $j(S_\omega)$ can be described as $j(\omega)$ many spines, each of which is a copy of $j(\omega)$, together with $j(\infty)$.

\begin{claim} 
The intersection of all $j(W)$, where $W\in V$ ranges over open neighborhoods of $\infty$, is the set $\{(n,\beta):n<\omega \textrm{ and }\beta \textrm{ infinite}\}$. 
\end{claim}
\begin{proofofclaim}
If $W\in V$ and $n<\omega$, then $W$ contains all points on the $n$th spine above some natural number $m$, so $j(W)$ does as well and in particular contains $\{(n,\beta):\beta \textrm{ infinite}\}$. Suppose $(\alpha, \beta)\in j(S_\omega)$ with $\alpha$ infinite. Now $\alpha,\beta$ are represented by functions $a,b:\omega\rightarrow\omega$, respectively, and since $U$ is a $P$-point, we may take $a$ to be finite-to-one. Let $f:\omega\rightarrow\omega$ be defined as $f(i)=\max\{b(m):i=a(m)\}+1$, so that $f(a(n))>b(n)$ for all $n$. Then $(\alpha,\beta)\not\in j(W_f)$. This completes the proof of the claim.
\end{proofofclaim}

If the set $[\varphi]$ contains a point $(n,\beta)$, where $n<\omega$ is finite and $\beta\in j(\omega)$ is infinite, then this gives a continuous selection. Otherwise, by overspill---the principle that states that the set $\omega$ is not a member of $M$---
 $[\varphi]$ is bounded on $\{n\}\times \omega$ for each $n<\omega$, since if it were unbounded then it must also contain an infinite point in the $n$th spine. Let $h:\omega\rightarrow \omega$ be the function giving a bound for each $n$.
% 
%Again by overspill there is some infinite $\gamma$ so that for every $\alpha<\gamma$ and every $\beta>j(h)(\alpha)$, $(\alpha,\beta)\not\in [\varphi]$. Let $c:\omega\rightarrow \omega$ be a finite-to-one function representing $\gamma$ in $M$. 
%

Since $[\varphi]$ is closed, there is some $g:j(\omega)\rightarrow j(\omega)$ in $M$ so that $[\varphi]$ intersects the $\alpha$th spine only below $g(\alpha)$. If $\langle g_n:\omega\rightarrow\omega\rangle$ is a sequence of functions which represents $g$ in $M$, then let $g':\omega\rightarrow \omega$ be such that $h< g'$ and $g_n<^*g'$ for each $n<\omega$. Let $\alpha\in j(\omega)$ be infinite, so $\alpha$ is represented by a finite-to-one function $a$. The set $\{n:g_n(a(n))\ge g'(a(n))\}$ is finite, therefore $\{n:g_n(a(n))< g'(a(n))\}\in U$ and so $g(\alpha)<j(g')(\alpha)$.

But then $\{n:\varphi(n)\cap W_{g'}\neq \emptyset \}\not\in U$, contradicting lower semicontinuity of $\varphi$. 
\end{proof}

\section{$L$-selective, not C-selective from a scale}\label{lncscale}

Suppose $\mathfrak{d}=\omega_1$. This is exemplified by a sequence of functions $\langle f_\alpha:\alpha<\omega_1\rangle$ so that $f_\alpha<^* f_\beta$ if $\alpha<\beta$, and for all $g\in {}^\omega\omega$ there is $\alpha<\omega_1$ so that $g<^* f_\alpha$.

Let $\{P_i:i<\omega\}$ be a partition of $\omega$ so that $P_i$ is infinite for each $i$. Let $$Z_\alpha=\{n<\omega: \exists i (n\in P_i \textrm{ and } n>f_\alpha(i))\}.$$

We define a space $X$ whose underlying set is $(\omega\times\omega_1)\cup\{\infty\}$, where all points of $\omega\times\omega_1$ are isolated, and a local subbase at $\infty$ is generated by the following sets:
\begin{itemize}
\item $X\setminus  (P_i\times \omega_1)$,
\item $X\setminus (Z_\alpha\times\alpha)$.
\end{itemize}
Let $\pi_0:X\rightarrow \omega$ and $\pi_1:X\rightarrow \omega_1$ denote the projection onto the first and second coordinates, respectively.

\begin{theorem}[$\mathfrak{d}=\omega_1$]
\label{lsnotus}
Suppose $U$ is a filter on $\omega$ so that $P_i\in I$ for all $i<\omega$ and $Z_\alpha\in I^+$ for all $\alpha<\omega_1$, where $I$ is the dual ideal of $U$. Then $X$ is Fr\'echet and $L$-selective but not $U$-selective.
\end{theorem}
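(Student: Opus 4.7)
The theorem has three parts, which I would tackle in order of increasing difficulty.

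For \emph{non-$U$-selectivity}, the natural lsc map is $\varphi(\infty)=\{\infty\}$, $\varphi(n)=\{n\}\times\omega_1$. A basic neighborhood $W$ of $\infty$ excludes only finitely many columns $P_i\times\omega_1$ and finitely many bounded sets $Z_\alpha\times\alpha$, and the $Z_\alpha\times\alpha$ pieces meet the column $\{n\}\times\omega_1$ only in bounded initial segments, so $\varphi(n)\cap W\ne\emptyset$ whenever $n$ avoids the forbidden columns. Thus $\{n:\varphi(n)\cap W\ne\emptyset\}\supseteq\omega\setminus\bigcup_{i\in F}P_i\in U$ because each $P_i\in I$. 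To rule out a selection $s(n)=(n,\alpha_n)$, set $\beta=\sup_n\alpha_n+1<\omega_1$; continuity at $\infty$ with respect to the neighborhood $X\setminus(Z_\beta\times\beta)$ requires $\omega\setminus Z_\beta\in U$, contradicting $Z_\beta\in I^+$.

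For \emph{Fréchet}, fix $A\subseteq\omega\times\omega_1$ with $\infty\in\overline A$. Since a finite union of columns is the complement of a neighborhood of $\infty$, $\pi_0[A]$ must meet infinitely many $P_i$. Set $J(\alpha)=\{i:A\cap(P_i\times[\alpha,\omega_1))\ne\emptyset\}$. If $J(\alpha)$ is infinite for every $\alpha<\omega_1$, I diagonalize: pick a cofinal $\beta_k\to\omega_1$, distinct $i_k\in J(\beta_k)$ with $i_k\to\infty$, and $(n_k,\gamma_k)\in A\cap(P_{i_k}\times[\beta_k,\omega_1))$; then $i_k\to\infty$ and $\gamma_k\to\omega_1$ together imply convergence to $\infty$. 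Otherwise let $\alpha^*$ be least with $J(\alpha^*)$ finite; after discarding the finitely many offending columns (preserving $\infty\in\overline A$), $A\subseteq\omega\times\alpha^*$. Now iteratively shrink: replace $A$ by $A\cap(X\setminus(Z_{\alpha^*}\times\alpha^*))$, which still accumulates at $\infty$ and remains $\subseteq\omega\times\alpha^*$, and recompute $\alpha^*$. Each step either strictly lowers $\alpha^*$ or fixes it, so by well-foundedness the iteration terminates at some $\alpha^*>0$. At the fixed point every point of $A$ satisfies $n\le f_{\alpha^*}(i(n))$, and $J(\alpha)$ is infinite for all $\alpha<\alpha^*$; since $\cf(\alpha^*)\le\omega$, I pick $(n_k,\gamma_k)\in A$ with $i_k\to\infty$, $n_k\le f_{\alpha^*}(i_k)$, and $\gamma_k$ cofinal in $\alpha^*$. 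The scale relation $f_{\alpha^*}<^* f_\alpha$ for $\alpha>\alpha^*$ propagates the first-coordinate bound to all larger indices, yielding convergence.

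For \emph{$L$-selectivity}, given lsc $\varphi:\omega+1\to\mathcal F(X)$ with $\varphi(\omega)=\{\infty\}$, I want $s(n)\in\varphi(n)$ with $s(n)\to\infty$. Setting $s(n)=\infty$ whenever $\infty\in\varphi(n)$ reduces to $\varphi(n)\subseteq\omega\times\omega_1$. Lower semicontinuity ensures $\{n:\varphi(n)\cap W\ne\emptyset\}$ is cofinite for every neighborhood $W$ of $\infty$, which is the analog of $\infty\in\overline{\bigcup_n\varphi(n)}$ used in the Fréchet argument. I would rerun the same dichotomy and fixed-point iteration on $A:=\bigcup_n\varphi(n)$; the key difference is that at stage $n$ the chosen point $s(n)$ must lie in $\varphi(n)$, which lsc supplies for cofinitely many $n$ against any prescribed neighborhood. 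In particular, choosing the column index $i_n>n$ with $\varphi(n)\cap(P_{i_n}\times\omega_1)\ne\emptyset$ (available by lsc applied to $X\setminus(P_{\le n}\times\omega_1)$) and then, at the terminal stage $\alpha^*$ of the iteration, picking $s(n)$ either to have $x_n\le f_{\alpha^*}(i_n)$ or $y_n\ge\alpha_n$ for a sequence $\alpha_n$ cofinal in $\alpha^*$ (both options made available by lsc for $V_{\alpha^*}$ and $V_{\alpha_n}$), gives the required selection.

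The main obstacle is the Case B fixed-point iteration in the Fréchet argument: verifying that restriction preserves accumulation at $\infty$ (immediate, the restriction equals $A\cap W$ for a neighborhood $W$), that the iteration terminates (ordinals are well-founded), and crucially that at the terminal stage \emph{both} the first-coordinate bound $n_k\le f_{\alpha^*}(i_k)$ and a distribution of second coordinates cofinal in $\alpha^*$ are simultaneously witnessed by one sequence, with the successor case $\alpha^*=\gamma+1$ needing separate treatment from the limit case. The $L$-selective version inherits this obstacle, plus the bookkeeping that the $n$-th term of the sequence comes from $\varphi(n)$.
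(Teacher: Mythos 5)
Your non-$U$-selectivity argument is correct and is the same as the paper's. The other two parts have genuine gaps, both traceable to the same missing idea: in this space, a sequence $\langle (n_k,\gamma_k)\rangle$ converges to $\infty$ only if for \emph{every} $\beta<\omega_1$ all but finitely many $k$ avoid $Z_\beta\times\beta$; since $\sup_k\gamma_k$ is some countable $\delta$, this forces $n_k\le f_\beta(i_k)$ (where $n_k\in P_{i_k}$) for all $\beta>\delta$, so the first coordinates must be controlled against the scale. Your Fr\'echet ``Case 1'' ignores this entirely: first, there is no countable cofinal sequence $\beta_k\to\omega_1$, so ``$\gamma_k\to\omega_1$'' is vacuous; second, and fatally, picking \emph{arbitrary} points of $A\cap(P_{i_k}\times[\beta_k,\omega_1))$ need not give a convergent sequence --- take $\delta=\sup_k\gamma_k+1$ and note nothing prevents $n_k>f_\delta(i_k)$ for all $k$, in which case every term lies in the closed set $Z_\delta\times\delta$. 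The paper's fix is to take $n_k=\min(\pi_0[A]\cap P_{i_k})$ and then choose $\delta$ so that $f_\delta$ dominates $i\mapsto\min(\pi_0[A]\cap P_i)$ and is simultaneously a limit of the second coordinates in each relevant column; the dominating family is doing real work in the generic case, not only at your ``fixed point.''

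The $L$-selectivity sketch inherits this problem and adds a circularity you do not resolve: the bound $\delta$ on the second coordinates of the selection is not known until the selection is built, yet the first coordinates must already beat $f_\beta$ for all $\beta\ge\delta$. Your disjunction ``$x_n\le f_{\alpha^*}(i_n)$ or $y_n\ge\alpha_n$'' fails against $Z_\beta\times\beta$ for $\beta$ just above $\sup_n y_n$: a point with large second coordinate still needs a small first coordinate there. The paper breaks the circle with a countable elementary submodel $M\ni\varphi$, setting $\delta=\sup(M\cap\omega_1)$, observing that the relevant first-coordinate functions ($d_n$ and $c$ in the paper's notation) are elements of $M$ and hence dominated by $f_\delta$, and choosing $s(n)\in\varphi(n)\cap M$ with second coordinate above a prescribed sequence cofinal in $\delta$. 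You also do not treat the case in which the sets $\varphi(n)$ are (eventually) trapped in the countable part $\omega\times\alpha^*$ or in finitely many blocks $P_i\times\omega_1$, where lower semicontinuity must be played against a decreasing neighborhood base of closed sets ($V_j$ and the finite-to-one function $b$ in the paper) rather than against the scale. These are not bookkeeping issues; without them the proposed selection need not converge.
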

\begin{proof}
First, we show that $X$ is not $U$-selective. Let $\psi:Y_U\rightarrow \mathcal{F}(X)$ be given by $\psi(n)=\{n\}\times \omega_1$ and $\psi(\infty)=\infty$. Note that $\psi(n)$ is closed for all $n< \omega$, since every $n$ is contained in some $P_i$ and then $\psi(n) \cap (X\setminus  (P_i\times \omega_1))=\emptyset$.

The function $\psi$ is lower semicontinuous since for every neighborhood $W$ of $\infty$, $\{n:(\{n\}\times\omega_1)\cap W\neq \emptyset\}\in U$. Suppose $s:Y_U\rightarrow X$ is a selection for $\psi$. Then there is $\alpha<\omega$ so that $\mathrm{ran}(s)\subseteq \alpha$. But $s^{-1}(X\setminus (Z_\alpha\times\alpha))=\omega\setminus Z_\alpha\not\in U$, so $s$ is not continuous.

Now we show that $X$ is $L$-selective. Suppose $\varphi:\omega+1\rightarrow \mathcal{F}(X)$ is lower semicontinuous. We will define a continuous selection $s:\omega+1\rightarrow X$. This is only nontrivial when $\infty\in \varphi(\infty)$ and in this case we will take $s(\infty)=\infty$. Furthermore, if $\infty\in \varphi(n)$ then we will take $s(n)=\infty$, so we assume that $\infty\not\in\varphi(n)$ for all $n<\omega$. Let $M\prec (H(\theta);\in,<_\theta)$ be a countable elementary submodel with $\varphi, \{f_\alpha:\alpha<\omega_1\} \in M$ and let $\delta=\sup(M\cap\omega_1)$. Define $$\pi_u(n)=\{i<\omega: \varphi(n)\cap \psi(i) \textrm{ is uncountable}\},$$ and $$\alpha^*=\sup\{\alpha: (\exists n)(\exists i\not\in \pi_u(n))\, (i,\alpha)\in \varphi(n)\}$$ and note that $\alpha^*\in M$, so $\alpha^*<\delta$.

\begin{claim}\label{cfc}
Every countable subspace of $X$ is first countable.
\end{claim}
\begin{proofofclaim}
Suppose $A\subseteq X$ is countable. Then let $\gamma=\sup \pi_1[A]$. We claim that the sets $A\setminus (P_i\times \omega_1)$ and $A\setminus (Z_\beta\times\beta)$, where $\beta\le \gamma$, form a local subbase at $\infty$ in $A$. For any $\alpha$, we show that $A\setminus (Z_\alpha\times\alpha)$ contains a finite intersection of the basic open sets. This is clear for $\alpha\le \gamma$, so assume that $\alpha>\gamma$. Since $Z_{\gamma}\subseteq^* Z_\alpha$, there is some $j$ so that $Z_{\gamma}\setminus Z_\alpha \subseteq \bigcup_{i\le j} P_i$. Then 
$$A\setminus (Z_\alpha\times\alpha)=A\setminus (Z_\alpha\times\gamma) \supseteq \left(A\setminus (Z_{\gamma}\times \gamma)\right)\cap \left(A\setminus (\bigcup_{i\le j} P_i \times \omega_1)\right).$$ This completes the proof of the claim.
\end{proofofclaim}
%general case?

%Suppose $A\subseteq X$ is countable. Then let $A_1=\pi_1[A]$. We claim that the sets $A\setminus (P_i\times \kappa)$ and $A\setminus (Z_\beta\times\beta)$, where $\beta \in \overline{A_1}$ (the closure of $A_1$ in the order topology on $\kappa$), form a neighborhood basis for $\infty$ in $A$. For any $\alpha$, we show that $A\setminus (Z_\alpha\times\alpha)$ contains a finite intersection of the basic open sets.  Let $\beta=\max(\overline{A_1}\cap (\alpha+1))$. Since $Z_\beta\subseteq^* Y_\alpha$ for $\alpha<\beta$, there is some $j$ so that $Z_\beta\setminus Z_\alpha \subseteq \bigcup_{i\le j} P_i$. Then 
%$$A\setminus (Z_\alpha\times\alpha)=A\setminus (Z_\alpha\times\beta) \supseteq \left(A\setminus (Z_\beta\times \beta)\right)\cap \left(A\setminus (\bigcup_{i\le j} P_i \times \kappa)\right).$$ This completes the proof of the claim.

Continuing with the proof of $L$-selectivity, by using Claim \ref{cfc} in $M$ fix $$\emptyset=F_0\subseteq F_1\subseteq F_2 \subseteq\cdots$$ closed subsets of $X\setminus(\omega\times [\alpha^*,\omega_1))$ whose complements form a neighborhood basis of $\infty$ in $X\setminus(\omega\times [\alpha^*,\omega_1))$. Let $$V_j=(\bigcup_{i< j} P_i\times \omega_1)\cup F_j,$$ a closed subset of $X$. For each $j$, let $B_j=\{n:\varphi(n)\subseteq V_j \}$. By the lower semicontinuity of $\varphi$, $B_j$ is finite for each $j$. 
Let $b(n)$ be the least $j$ so that $n\in B_{j+1}$, if it exists, and undefined otherwise. Note that $b$ is finite-to-one.

Each of the following functions is in $M$, and therefore $<^* f_\delta$:
\begin{itemize}
\item for each $n$, the function $d_n:i \mapsto \min(\pi_u(n)\cap P_i)$ (and we interpret the minimum as $0$ if this set is empty).
\item the function $c:i\mapsto \max\{\min(\pi_u(n)\cap P_i): b(n)=i-1 \}$.
\end{itemize}

Let $i^*$ be large enough so that $c(i)<f_\delta(i)$ for $i> i^*$, and $i_n$ large enough so that  $d_n(i)<f_\delta(i)$ for $i> i_n$. Let $\langle \delta_n:n<\omega\rangle$ be increasing and cofinal in $\delta$. We will define a continuous selection $s$ for $\varphi$. There are three cases:

\emph{Case 1:} $b(n)$ exists and $P_{b(n)} \cap \pi_u(n)\neq \emptyset$. Choose $s(n)\in\varphi(n)\cap M$ with first coordinate equal to $\min(\pi_u(n)\cap P_{b(n)})$ and second coordinate above $\delta_n$ (and hence in the interval $(\delta_n,\delta)$), which exists by elementarity. 

\emph{Case 2:} $b(n)$ exists and $P_{b(n)} \cap \pi_u(n)= \emptyset$. Choose $s(n)$ to be a member of $\varphi(n)\setminus F_{b(n)}$ with second coordinate less than $\alpha^*$. This exists since otherwise $\varphi(n)\subseteq F_{b(n)}$, so the choice of $b(n)$ would not have been minimal. 

\emph{Case 3:} $b(n)$ doesn't exist. In this case, $\phi(n)\not\subseteq V_j$ for any $j<\omega$, so by the choice of $\alpha^*$ and the fact that the $V_j$ restrict to a neighborhood basis of $\infty$ in $X\setminus(\omega\times [\alpha^*,\omega_1))$, we have that $\{i: \pi_u(n)\cap P_i\neq\emptyset\}$ is infinite. Choose $s(n)\in \varphi(n)\cap M$ with first coordinate equal to $\min(\pi_u(n)\cap P_i)$ for some $i> \max\{n,i_n\}$, and second coordinate above $\delta_n$.

We now verify that $s$ is a converging sequence. By the proof of the earlier claim, it suffices to check that each of $P_i\times \omega_1$ and $Z_\beta\times\beta$, $\beta\le \delta$, contain $s(n)$ for at most finitely many $n$. Since $b(n)$ is finite-to-one, by the construction each subbasic set contains $s(n)$ for only finitely many $n$ from Case 2, and sets of the form $P_i\times \omega_1$ only contain $s(n)$ for at most finitely many $n$ from any case.  Since in Cases 1 and 3 we chose $s(n)$ to have second coordinate greater than $\delta_n$, $Z_\beta\times\beta$ contains $s(n)$ for only finitely many $n$ if $\beta<\delta$. 

It remains to show the result for $Z_\delta\times\delta$. Suppose $\pi_0(s(n))\in Z_\delta$. This means that $\pi_0(s(n))>f_\delta(i)$, where $\pi_0(s(n))\in P_i$. 

If $n$ is in Case 1, $\pi_0(s(n))=\min(\pi_u(n)\cap P_{b(n)})$. If $b(n)\ge  i^*$, then $\min(\pi_u(n)\cap P_{b(n)})<f_\delta(b(n))$ and so $b(n)< i^*$. Since $b$ is finite-to-one, there are only finitely many such $n$.

If $n$ is in Case 3, $\pi_0(s(n))= \min(\pi_u(n)\cap P_i)$ for some $i> \max\{n,i_n\}$. By the choice of $i_n$, $\min(\pi_u(n)\cap P_i)=d_n(i)<f_\delta(i)$, a contradiction.

This finishes the proof that $s$ is continuous.

Finally, we check that $X$ is Fr\'echet. Suppose $\infty\in\overline{A}$. Let $\pi_u(A)=\{i:A\cap\psi(i) \textrm{ is uncountable}\}$ and $\alpha^*=\sup \pi_1[\{(i,\alpha)\in A: i\not\in\pi_u(A)\}]$. If $\infty\in \overline{A\cap (\omega\times\alpha^*)}$ then we are done by Claim \ref{cfc}. Otherwise, by removing a countable set from $A$, we may assume that $\pi_u(A)=\pi_0[A]$. Let $\delta<\omega_1$ be such that:
\begin{itemize}
\item $\delta$ is a limit point of $\pi_1[A\cap \psi(i)]$ for every $i$,
\item $f_\delta$ dominates the function $i\mapsto \min(\pi_0[A]\cap P_i)$ (where we interpret the minimum as $0$ if this set is empty).
\end{itemize} 
It is then straightforward to verify that any finite union of sets of the form $P_i\times \omega_1$ and $Z_\beta\times\beta$, where $\beta\le \gamma$, do not contain all members of $A$.
\end{proof}

In \cite{FGS} it was observed that every countable metrizable space embeds as a closed subspace of $\mathbb{Q}$, and therefore if a space is not selective for some countable metrizable space, then it is not $\mathbb{Q}$-selective.
\begin{corollary}[$\mathfrak{d}=\omega_1$]
$X$ is not $\mathbb{Q}$-selective.
\end{corollary}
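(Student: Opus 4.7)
The plan is to apply Theorem \ref{lsnotus} to a filter $U$ chosen so that the corresponding space $Y_U$ is a countable metric space, and then invoke the embedding of countable metric spaces as closed subsets of $\mathbb{Q}$ noted just before the corollary. I would take $U$ to be the proper filter on $\omega$ generated by the countable family $\{\omega\setminus P_i:i<\omega\}$; properness follows from the fact that no finite union $\bigcup_{i\in F}P_i$ exhausts $\omega$, since the $P_i$'s partition $\omega$ into infinite pieces.

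Next I would verify the hypotheses of Theorem \ref{lsnotus} for this $U$. That $P_i$ lies in the dual ideal $I$ is immediate from the definition of $U$. For $Z_\alpha\in I^+$, observe that because the $P_j$'s are a partition and $Z_\alpha\cap P_j=\{n\in P_j:n>f_\alpha(j)\}$ is nonempty for every $j$, the set $Z_\alpha$ cannot be contained in any finite union $\bigcup_{i\in F}P_i$, i.e., $Z_\alpha\notin I$. I would then check metrizability of $Y_U$: since $U$ is countably generated, $\infty$ has a countable local base in $Y_U$; each basic neighborhood $\{\infty\}\cup(\omega\setminus\bigcup_{i\in F}P_i)$ is clopen because its complement is a union of isolated points of $\omega$; so $Y_U$ is a countable, first countable, regular, Hausdorff space, and hence metrizable by Urysohn's theorem.

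Finally, I would perform the transfer to $\mathbb{Q}$. Using the cited fact, fix a closed embedding of $Y_U$ into $\mathbb{Q}$. Given any lower semicontinuous $\varphi:Y_U\rightarrow \mathcal{F}(X)$, extend it to $\varphi':\mathbb{Q}\rightarrow \mathcal{F}(X)$ by $\varphi'(q)=\varphi(q)$ on $Y_U$ and $\varphi'(q)=X$ off $Y_U$; lower semicontinuity is preserved because $Y_U$ is closed in $\mathbb{Q}$, and any continuous selection for $\varphi'$ restricts to one for $\varphi$. Thus $\mathbb{Q}$-selectivity of $X$ would imply $Y_U$-selectivity, contradicting Theorem \ref{lsnotus}. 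No substantive obstacle arises; the argument is essentially bookkeeping once the correct filter $U$ is identified.
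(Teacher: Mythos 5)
Your proposal is correct and follows the paper's own argument: the filter generated by $\{\omega\setminus P_i:i<\omega\}$ is exactly the dual filter of the ideal generated by $\{P_i:i<\omega\}$ used in the paper, and the reduction to $\mathbb{Q}$-selectivity via a closed embedding of the countable metrizable space $Y_U$ is the same transfer the paper cites from \cite{FGS}. You simply supply the routine verifications (properness of $U$, $Z_\alpha\in I^+$, metrizability, and the extension of $\varphi$ by $X$ off the closed copy of $Y_U$) that the paper leaves implicit.
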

\begin{proof}
Take $U$ to be the dual filter of the ideal generated by $\{P_i:i<\omega\}$. Then $Y_U$ is countable metrizable, and $U$ satisfies the hypotheses of Theorem \ref{lsnotus} and since $X$ is not $U$-selective, it is not $\mathbb{Q}$-selective.
\end{proof}

\begin{corollary}[$\mathfrak{d}=\omega_1$]
\label{unotpp}
$X$ is not $U$-selective for any ultrafilter $U$ on $\omega$ which is not a $P$-point.
\end{corollary}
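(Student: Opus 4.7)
The plan is to reduce the corollary to a direct application of Theorem \ref{lsnotus}. Given a non-$P$-point ultrafilter $U$, I would first invoke the partition characterization of $P$-points recalled in Section 2: $U$ fails to be a $P$-point precisely when there exists a partition $\{P_i:i<\omega\}$ of $\omega$ such that $P_i\notin U$ for every $i$ and no $A\in U$ satisfies $|A\cap P_i|<\omega$ for all $i$. I would then take this partition as the one used in the construction of $X$, so that the space $X$ is built from a partition that witnesses the failure of the $P$-point property of $U$.

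With this choice in hand, I would verify the two hypotheses of Theorem \ref{lsnotus}. The condition $P_i\in I$ is immediate, since the dual ideal $I$ consists of the complements of members of $U$ and $P_i\notin U$. For the condition $Z_\alpha\in I^+$, I would observe that $(\omega\setminus Z_\alpha)\cap P_i\subseteq\{n\in P_i: n\le f_\alpha(i)\}$, which is finite. Hence $\omega\setminus Z_\alpha$ meets each $P_i$ in a finite set, so by the choice of partition it cannot belong to $U$. As $U$ is an ultrafilter, $Z_\alpha\in U$ and therefore $Z_\alpha\in I^+$.

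Applying Theorem \ref{lsnotus} then delivers that $X$ is not $U$-selective. I do not anticipate any real obstacle here; the corollary is essentially bookkeeping once one recognizes that the partition appearing in the definition of $X$ is at our disposal and can be chosen, for any given non-$P$-point $U$, so as to reflect the partition witnessing the failure of the $P$-point property. The only conceptual point worth being explicit about is that the sets $Z_\alpha$ are designed so that their complements are finite-to-one onto the partition, which is exactly the obstruction to membership in a $P$-point-witnessing ultrafilter.
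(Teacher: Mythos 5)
Your verification that a partition witnessing the failure of the $P$-point property satisfies the hypotheses of Theorem \ref{lsnotus} is correct, and in fact more complete than the paper's: the observation that $(\omega\setminus Z_\alpha)\cap P_i\subseteq\{n\in P_i:n\le f_\alpha(i)\}$ is finite, so that $\omega\setminus Z_\alpha\notin U$ and hence $Z_\alpha\in U\subseteq I^+$, is exactly the point, and the paper leaves it implicit. However, there is a genuine mismatch between what you prove and what the corollary asserts. The space $X$ is fixed in Section \ref{lncscale}, built from one partition $\{P_i:i<\omega\}$ chosen in advance, and the corollary claims that this single $X$ fails to be $U$-selective for \emph{every} non-$P$-point ultrafilter $U$. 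By ``tak[ing] this partition as the one used in the construction of $X$'' you are rebuilding the space for each $U$, so what you actually establish is: for every non-$P$-point $U$ there exists some $L$-selective space (depending on $U$) which is not $U$-selective. That weaker statement does not suffice for the subsequent corollary, which needs one space that is simultaneously non-$U$-selective for all ultrafilters $U$ in a model with no $P$-points.

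The paper's proof keeps $X$ fixed and instead moves the ultrafilter: given the witnessing partition $\{Q_i\}$ (arranged so each $Q_i$ is infinite), a bijection of $\omega$ carrying $Q_i$ onto $P_i$ sends $U$ to an isomorphic ultrafilter $U'$ for which the \emph{fixed} partition $\{P_i\}$ witnesses non-$P$-pointness; Theorem \ref{lsnotus} then applies to $U'$, and since $Y_U$ and $Y_{U'}$ are homeomorphic, $X$ is $U$-selective if and only if it is $U'$-selective. Your argument is repaired by exactly this one additional step, with your verification of the hypotheses carried out for $U'$ rather than for a re-chosen partition.
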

\begin{proof}
There is some partition $\{Q_i:i<\omega\}$ of $\omega$ so that each $Q_i$ is infinite but not in $U$ so that whenever $q_i\subseteq Q_i$ is finite, then $\bigcup_{i<\omega} q_i$ is not in $U$. By mapping $Q_i$ bijectively to $P_i$, $U$ is isomorphic to an ultrafilter which satisfies the hypotheses of Theorem \ref{lsnotus}. 
\end{proof}

\begin{corollary}
It is consistent relative to $\mathsf{ZFC}$ that there is an $L$-selective space $X$ which is not $U$-selective for any ultrafilter $U$.
\end{corollary}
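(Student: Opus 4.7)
The plan is to combine Theorem \ref{lsnotus} and Corollary \ref{unotpp} inside a single model of $\mathsf{ZFC}$ in which $\mathfrak{d} = \omega_1$ holds and, simultaneously, no $P$-point ultrafilter exists on $\omega$. In such a model, the space $X$ built at the start of this section is $L$-selective by Theorem \ref{lsnotus}, while by Corollary \ref{unotpp} it fails to be $U$-selective for every ultrafilter $U$ which is not a $P$-point. Since no $P$-point is available, $X$ is then not $U$-selective for any ultrafilter $U$ on $\omega$, which is exactly what the corollary asserts.

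Thus the whole statement reduces to the joint consistency of $\mathfrak{d} = \omega_1$ with ``no $P$-point ultrafilters exist on $\omega$,'' a well-known consistency result of Shelah. One obtains a suitable model via a countable-support iteration of proper forcings of length $\omega_2$ over a ground model of $\mathsf{CH}$, where the iterands are arranged so that every potential $P$-point in the final model is destroyed at some intermediate stage. To preserve $\mathfrak{d} = \omega_1$, the iterands are chosen not to add dominating reals, and one appeals to the preservation theorem for ``not adding a dominating real'' under countable-support iteration of proper forcings, so that the ground-model scale of length $\omega_1$ remains a dominating family in the extension.

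The only real obstacle is this underlying consistency result, which is classical and outside the scope of the present paper. Once such a model has been fixed, the corollary follows at once from Theorem \ref{lsnotus} and Corollary \ref{unotpp}, with no further combinatorics required.
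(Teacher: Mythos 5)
Your proposal is correct and follows the same route as the paper: both reduce the corollary to exhibiting a model of $\mathfrak{d}=\omega_1$ in which no $P$-points exist and then quote Theorem \ref{lsnotus} together with Corollary \ref{unotpp}. The only difference is the model invoked --- the paper uses the iterated Silver model, where the absence of $P$-points is a recent result of Chodounsk\'y and Guzm\'an, while you appeal to a Shelah-style iteration; either works (and in fact $\mathfrak{d}=\omega_1$ comes for free in any such model with $\mathfrak{c}=\aleph_2$ by Ketonen's theorem that $\mathfrak{d}=\mathfrak{c}$ implies the existence of $P$-points).
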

\begin{proof}
The model obtained by iterated Silver forcing gives an example. In that model, $\mathfrak{d}=\omega_1$, and there are no $P$-points by a recent result of Chodounsk\'y and Guzm\'an \cite{CG}.
\end{proof}

\section{$L$-selective, not $C$-selective from a tight gap}\label{lncgap}

Gaps in $[\omega]^\omega$ have been used to find examples of interesting convergence properties starting with Nyikos \cite{N}. In this section, we will use $\mathsf{CH}$ to construct $L$-selective, not $C$-selective spaces based on a certain kind of gap. A tight gap is a sequence $\langle A_\alpha, B_\alpha:\alpha<\omega_1\rangle$ so that:
\begin{itemize}
\item $A_\alpha,B_\alpha$ are infinite subsets of $\omega$,
\item For all $\alpha<\beta$, $A_\alpha\subseteq^* A_\beta$ and $B_\beta\subseteq^* B_\alpha$ and $A_\beta\subseteq^* B_\alpha$,
\item If $E\subseteq^* B_\alpha$ for all $\alpha$, then there is $\beta$ so that $E\subseteq^* A_\beta$.
\item If $E\supseteq^* A_\alpha$ for all $\alpha$, then there is $\beta$ so that $E\supseteq^* B_\beta$.
\end{itemize}

A sequence $\langle A_\alpha, B_\alpha:\alpha<\gamma\rangle$ (where $\gamma\le \omega_1$) which satisfies the first two conditions of the definition of a tight gap is called a \emph{pre-gap}. We say that a set $E\subseteq \omega$ \emph{interpolates} the pre-gap $\langle A_\alpha, B_\alpha:\alpha<\gamma\rangle$ if $A_\alpha\subseteq^* E\subseteq^* B_\alpha$ for every $\alpha<\gamma$. The final two conditions of the definition (whose conjunction is called ``tightness'') ensure that no set interpolates the tight gap.

Given a tight gap $\mathcal{G}=\langle A_\alpha, B_\alpha:\alpha<\omega_1\rangle$, we can construct a space $X_\mathcal{G}$ with underlying set $\omega+1$, where the points of $\omega$ are isolated and the $B_\alpha$ generate the neighborhoods of $\omega$. We denote the point $\omega$ by $\infty$ to avoid confusion with the set of isolated points.

\begin{fact}
\begin{enumerate}
\item[]
\item An infinite set $E\subseteq \omega$ converges to $\infty$ if and only if there is some $\alpha$ so that $E\subseteq^* A_\alpha$.
\item $X_\mathcal{G}$ is a Fr\'echet $\alpha_1$ space.
\end{enumerate}
\end{fact}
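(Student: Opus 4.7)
My plan is to leverage the two tightness conditions for the two substantive parts of the fact, after first observing that the pre-gap monotonicity combined with the condition $A_\beta \subseteq^* B_\alpha$ for $\alpha<\beta$ yields the uniform inclusion $A_\alpha \subseteq^* B_\beta$ for \emph{all} pairs $\alpha,\beta<\omega_1$ (the case $\alpha \le \beta$ follows from $A_\alpha \subseteq^* A_{\beta+1} \subseteq^* B_\beta$). Given the construction of $X_{\mathcal{G}}$, an infinite $E\subseteq\omega$ converges to $\infty$ precisely when $E \subseteq^* B_\alpha$ for every $\alpha<\omega_1$, since the sets $\{\infty\} \cup B_\alpha$ (up to finite modifications) form a neighborhood base at $\infty$.

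Part (1) is then immediate: the uniform inclusion above shows that $E \subseteq^* A_\gamma$ for some $\gamma$ implies $E \subseteq^* B_\beta$ for every $\beta$, while the third tightness condition supplies the converse. For the $\alpha_1$ property in part (2), given convergent sequences $E_n$ witnessed by $E_n \subseteq^* A_{\gamma_n}$, I would let $\gamma = \sup_n \gamma_n < \omega_1$ and take the single convergent sequence to be $A_\gamma$ itself: by pre-gap monotonicity, $E_n \subseteq^* A_{\gamma_n} \subseteq^* A_\gamma$, so $A_\gamma$ almost contains every $E_n$ and, being equal to itself, is convergent.

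The Fr\'echet property is the main substantive step, which I would prove by contrapositive. Suppose no infinite subset of $A$ converges to $\infty$. Then $A \cap A_\gamma$ must be finite for every $\gamma$ (otherwise $A \cap A_\gamma$ itself would be a convergent subsequence inside $A$ by part (1)), equivalently $\omega \setminus A \supseteq^* A_\gamma$ for every $\gamma$. The fourth tightness condition then produces some $\beta$ with $\omega \setminus A \supseteq^* B_\beta$, meaning $A \cap B_\beta$ is finite; removing those finitely many points from $B_\beta$ gives a basic open neighborhood $\{\infty\} \cup (B_\beta \setminus A)$ of $\infty$ disjoint from $A$, so $\infty \notin \overline{A}$. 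The crux, and the only nonroutine move in the argument, is recognizing that the fourth tightness condition applied to the complement $\omega \setminus A$ is precisely what converts the failure of Fr\'echet into a witness that $\infty$ is not a limit point of $A$.
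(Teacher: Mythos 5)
Your proposal is correct and follows essentially the same route as the paper: part (1) via the uniform inclusion $A_\alpha\subseteq^* B_\beta$ plus the third tightness condition, the $\alpha_1$ property via $A_{\sup_n\gamma_n}$, and the Fr\'echet property via the fourth tightness condition applied to $\omega\setminus A$ (the paper phrases this last step as a direct contradiction rather than a contrapositive, but the argument is the same). Both you and the paper implicitly treat finite modifications of the $B_\alpha$ as neighborhoods of $\infty$, which is the intended reading of the construction.
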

\begin{proof}
(1) If $E\subseteq^* A_\alpha$ for some $\alpha$ then $E\subseteq^* B_\beta$ for all $\beta<\omega_1$, so $E$ converges to $\infty$. Conversely, suppose $E$ converges to $\infty$, so $E\subseteq^* B_\alpha$ for every $\alpha<\omega_1$. Then by tightness, we have that there is some $\alpha$ so that $E\subseteq^* A_\beta$.

(2) Suppose $\infty\in\overline{E}$ for some $E\subseteq \omega$. There must be $\alpha<\omega_1$ so that $E\cap A_\alpha$ is infinite, otherwise $\omega\setminus E \supseteq^* A_\alpha$ for all $\alpha$, and by tightness there is a $\beta$ so that $E\cap B_\beta$ is finite, contradicting $\infty\in\overline{E}$. By (1), $E\cap A_\alpha$ converges to $\infty$, and we conclude that $X_\mathcal{G}$ is Fr\'echet. If there are sequences $E_n$, $n<\omega$, all converging to $\infty$, then using (1) we have that for each $n$ there is $\alpha(n)$ so that $E_n\subseteq A_{\alpha(n)}$. Letting $\alpha=\sup_n \alpha(n)$, we have that $A_\alpha$ is sequence converging to $\infty$ which almost contains every $E_n$.
\end{proof}

In Corollary \ref{unotpp}, or by modifying Example 5.3 of \cite{FGS}, spaces which are $L$-selective but not $U$-selective for an ultrafilter $U$ are constructed. In those spaces, it seems essential that $U$ is not a $P$-point. The tight gap allows us to construct an example for a $P$-point ultrafilter.

\begin{theorem}[$\mathsf{CH}$]
There is a $P$-point ultrafilter $U$ and a tight gap $\mathcal{G}$ so that $X_\mathcal{G}$ is $L$-selective but not $U$-selective.
\end{theorem}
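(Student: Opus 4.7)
The plan is a simultaneous transfinite recursion of length $\omega_1$ under $\mathsf{CH}$, constructing the tight gap $\mathcal{G}=\langle A_\xi,B_\xi:\xi<\omega_1\rangle$, the $P$-point ultrafilter $U$ (as an increasing union of countably generated filters $U_\xi$, each containing all $B_\eta$ with $\eta\le\xi$), and in parallel a witnessing lower semicontinuous $\varphi^*:Y_U\to\mathcal{F}(X_\mathcal{G})$ for non-$U$-selectivity. Using $\mathsf{CH}$ I would fix three enumerations of length $\omega_1$: all $\omega$-sequences of subsets of $\omega$ (candidates for lsc maps $\omega+1\to\mathcal{F}(X_\mathcal{G})$, used to secure $L$-selectivity), all $E\subseteq\omega$ (for tightness diagonalization and for deciding membership in $U$), and all partitions of $\omega$ into infinite pieces (for the $P$-pointness of $U$).

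At stage $\xi$, fix a countable elementary submodel $M_\xi\prec H(\theta)$ containing the current state of the construction along with the $\xi$-th items of the three enumerations, and set $\delta_\xi=\sup(M_\xi\cap\omega_1)$. The step then extends the pre-gap to $A_\xi,B_\xi$ with $A_\eta\subseteq^* A_\xi\subseteq^* B_\xi\subseteq^* B_\eta$ for $\eta<\xi$; maintains tightness by dominating the $\xi$-th enumerated subset on the relevant side; maintains $P$-pointness by either placing a cell of the $\xi$-th partition into $U_{\xi+1}^+$ or absorbing a suitable pseudo-intersection; and secures $L$-selectivity via a reflection argument mirroring Theorem \ref{lsnotus}: for the $\xi$-th candidate $\varphi$, elementarity in $M_\xi$ yields a selection $s_\xi:\omega\to\omega$ with $s_\xi(n)\in\varphi(n)$, and one arranges $\{s_\xi(n):n<\omega\}\subseteq A_\xi$ so that $s_\xi$ converges to $\infty$ in $X_\mathcal{G}$ (invoking the fact from Section \ref{lncgap} that $\omega$-sequences converging to $\infty$ in $X_\mathcal{G}$ are exactly those almost contained in some $A_\alpha$).

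The non-$U$-selective witness $\varphi^*$ is maintained in parallel via a partition $\{P_n^*:n<\omega\}$ of $\omega$ into infinite sets, ensuring throughout that (a) each $P_n^*\cap A_\xi$ is finite (so $P_n^*$ is closed in the evolving $X_\mathcal{G}$), (b) $\{n:P_n^*\cap B_\xi\neq\emptyset\}\in U$ for every $\xi$ (so $\varphi^*(n):=P_n^*$ is lsc), and (c) no $P_n^*$ belongs to $U$. Any continuous selection $s(n)\in P_n^*$ would need $s^{-1}(B_\xi)\in U$ for all $\xi$; applying $P$-pointness to $\{P_n^*\}$ yields $\bigcup_n q_n^*\in U$ with $q_n^*\subseteq P_n^*$ finite, forcing $s$ into a sparse finite pattern, and tightness combined with (a) then rules out such a selection. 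The main obstacle is the compatibility of the $L$-selectivity task (pushing selection ranges into each $A_\xi$) with (a) (forcing the $P_n^*$'s to avoid all $A_\xi$'s): this is handled by coherent bookkeeping inside $M_\xi$ at each stage, coordinating the choice of $s_\xi$ and the revision of $\{P_n^*\}$ to stay disjoint, while preserving (b) and (c) through the tight-gap and $P$-point diagonalizations.
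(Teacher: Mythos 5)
Your overall architecture---a length-$\omega_1$ recursion under $\mathsf{CH}$ building the gap, the ultrafilter, and a partition-based witness $\varphi^*$ simultaneously---matches the paper's, but two of the essential diagonalizations are missing or wrong. First, the $L$-selectivity step cannot be ``find a selection $s_\xi$ of the $\xi$-th candidate $\varphi$ and arrange its range inside $A_\xi$'': for some candidates no selection lands in any $A_\beta$, and the witness partition itself is such a candidate (if it had a selection converging to $\infty$, i.e.\ with range almost contained in some $A_\beta$, that selection would already be continuous on $Y_U$, destroying non-$U$-selectivity). The paper's key device is a dichotomy at each successor stage: \emph{either} some $A_\beta$ meets $\varphi(i)\setminus k$ for cofinitely many $i$ and every $k$ (then $i\mapsto\min((A_\beta\setminus k)\cap\varphi(i))$ is a convergent selection), \emph{or} one shrinks $B_{\alpha+1}$ so that some $B_\beta\setminus k$ misses infinitely many $\varphi(i)$, which makes $\varphi$ fail to be lower semicontinuous and removes it from consideration. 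Arranging this second horn while simultaneously keeping $B_{\alpha+1}$ large on $U$-many cells of the witness partition (so that $\varphi^*$ stays lower semicontinuous) is the real combinatorial content of the proof---the paper's Case 1/Case 2 analysis at successor stages---and ``coherent bookkeeping inside $M_\xi$'' does not supply it.

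Second, your non-$U$-selectivity argument is not a proof. Conditions (a)--(c) together with tightness and $P$-pointness do not preclude a continuous selection of $\varphi^*$: a transversal of $\{P_n^*\}$ can perfectly well be almost contained in some $B_\beta$ for every $\beta$ even when each $P_n^*\cap A_\xi$ is finite, and nothing in ``forcing $s$ into a sparse finite pattern'' yields a contradiction. The paper explicitly remarks that its analogue of your condition (a) ``is not sufficient'' for this purpose; instead it enumerates all $\omega_1$ potential selections $s_\alpha$ of the witness partition and kills each one by subtracting $s_\alpha[\omega]$ when forming $B_{\alpha+1}$, securing $\{i:s_\alpha(i)\in B_{\alpha+1}\}\cap U_{\alpha+1}=\emptyset$. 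You would need to add this enumeration-and-kill step. Two smaller points: your filters $U_\xi$ should live on the index set of the partition and cannot ``contain all $B_\eta$''---those are subsets of the underlying set of $X_{\mathcal{G}}$, a different copy of $\omega$; and in the paper $P$-pointness comes for free from generating $U$ by a $\subseteq^*$-decreasing tower, so your separate diagonalization over all partitions, while workable, adds an unnecessary task to an already overloaded recursion.
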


\begin{proof}
A \emph{pseudo-intersection} of a sequence $\langle B_\alpha:\alpha<\kappa\rangle$ is a set $B\subset \omega$ so that $B\subseteq^* B_\alpha$ for all $\alpha<\kappa$. A \emph{tower} is a sequence $\langle B_\alpha:\alpha<\kappa\rangle$ of subsets of $\omega$ so that $B_\beta\subseteq^* B_\alpha$ for $\alpha<\beta$, and there is no infinite pseudo-intersection.

By induction we will construct a tight gap $\langle A_\alpha,B_\alpha:\alpha<\omega_1\rangle$ and an ultrafilter $U$ generated by a tower $\langle U_\alpha:\alpha<\omega_1\rangle$. Using $\mathsf{CH}$, enumerate all infinite subsets of $\omega$ as $\langle E_\alpha:\alpha<\omega_1\rangle$. Let $\{\psi(i):i<\omega\}$ be a partition of $\omega$ so that $\psi(i)$ is infinite for each $i$. 

We will construct so that $A_\alpha\subseteq B_\alpha$ for all $\alpha$. In the construction, we maintain the following:
\begin{enumerate}
\item\label{cond:decides} Either $U_\alpha\cap E_\alpha=\emptyset$ or $U_\alpha\subseteq E_\alpha$.
\item\label{cond:finite} For each $i$, $B_\alpha\cap \psi(i)$ is finite,
\item\label{cond:Blarge}  For all $k$, $\{i: |B_\alpha \cap \psi(i)|>k\}\supseteq^* U_\alpha$,
\item\label{cond:Asmall}  $\{i: A_\alpha\cap \psi(i)=\emptyset\}\supseteq U_\alpha$.
\end{enumerate}
Condition (\ref{cond:decides}) ensures that $\langle U_\alpha:\alpha<\omega_1\rangle$ generates an ultrafilter. (\ref{cond:finite}) ensures that each $\psi(i)$ is closed, and holds for all $\alpha$ if it holds for $\alpha=0$.  (\ref{cond:Blarge}) ensures that $\psi\cup\{(\infty,\infty)\}$ is lower semicontinuous as a function $Y_U\rightarrow \mathcal{F}(X_\mathcal{G})$. (\ref{cond:Asmall}) is needed to make sure that no selection from $\psi$ gives a converging sequence (but is not sufficient to show that $\psi\cup\{(\infty,\infty)\}$ does not admit a continuous selection). 

At limit stages $\alpha$, we will construct $A_{\alpha}\subseteq B_{\alpha}$, both of which interpolate the gap constructed so far. 

\begin{claim}
Suppose $\gamma$ is a countable limit ordinal and $\langle A_\alpha, B_\alpha :\alpha<\gamma\rangle$ is a pre-gap which satisfies the conditions (\ref{cond:finite})--(\ref{cond:Asmall}). Then there are a pseudo-intersection $U_\gamma$ and interpolations $A_\gamma\subseteq B_\gamma$ which satisfy the conditions as well.
\end{claim}
\begin{proofofclaim}
It suffices to prove the claim for $\gamma=\omega$, as for arbitrary countable limit $\gamma$ we can take $\langle \alpha_n:n<\omega\rangle$ an increasing sequence cofinal in $\gamma$, and an interpolation for $\langle A_{\alpha_n}, B_{\alpha_n}:n<\omega\rangle$ also interpolates the full pre-gap. By making finite modifications to each set in the pre-gap, we may also assume that $A_m\subseteq A_n\subseteq B_m\subseteq B_n$ for all $m<n<\omega$.

Define $U_\gamma$ be a pseudo-intersection of $\{U_\beta:\beta<\gamma\}$ satisfying (\ref{cond:decides}). The set $U_\gamma$ is almost contained in $\{i:|B_n\cap \psi(i)|>k\}$ and in $\{i:A_n\cap \psi(i)=\emptyset\}$ for each $n,k<\omega$.

For each $n<\omega$, define inductively $g(n)$ so that $g(n)$ is the least natural number greater than or equal to $g(m)$ for all $m<n$ so that 
$$U_\gamma\setminus g(n)\subseteq \{i:|B_{n}\cap \psi(i)|>n\}\cap \{i:A_{n}\cap \psi(i)=\emptyset\}$$
Now define 
$$A_\omega=\bigcup_{n<\omega} (A_n\setminus \Psi(g(n)))$$
and
$$B_\omega=\bigcup_{n<\omega} (B_n\cap \Psi(g(n+1))),$$
where $\Psi(k)=\bigcup_{i\le k} \psi(i)$ for all $k$.

For any $n<\omega$, $A_n\setminus A_\omega\subseteq A_n\cap\Psi(g(n))$, which is finite by condition (\ref{cond:finite}); so $A_\omega\supseteq^* A_n$. Furthermore, $B_\omega\setminus B_n\subseteq \Psi(g(n))\cap B_0$, so $B_\omega\subseteq^* B_n$. Since $A_m\subseteq B_n$, for all $m,n$, we have $A_\omega\subseteq B_\omega$.

For any $k<\omega$, $n> k$, and $i\in U_\gamma$ at least $g(k)$, we have $|B_n\cap \psi(i)|> k$, so
$$\{i: |B_\omega \cap \psi(i)|>k\}\supseteq U_\gamma\setminus g(k).$$
For any $n<\omega$ and $i> g(n)$ we have $|A_n\cap \psi(i)|=\emptyset$, so $$\{i: |A_\omega \cap \psi(i)|=\emptyset\}\supseteq U_\gamma.$$ This finishes the proof of the claim.
\end{proofofclaim}
%check carefully.

Let $\langle s_\alpha:\alpha<\omega_1\rangle$ enumerate all selections of $\psi$ and $\langle \varphi_\alpha:\alpha<\omega_1\rangle$ enumerate all functions $\varphi:\omega\rightarrow \mathcal{P}(\omega)\setminus\emptyset$. At stage $\alpha+1$, we construct $U_{\alpha+1}$, $A_{\alpha+1}, B_{\alpha+1}$ so that: 
\begin{enumerate}
\addtocounter{enumi}{4}
\item\label{cond:tightness2} there is $\beta\le \alpha+1$ so that $A_\beta \cap E_\alpha$ is infinite or $B_\beta\cap E_\alpha$ is finite.
\item\label{cond:tightnessb2} there is $\beta\le \alpha+1$ so that $E_\alpha\setminus A_\beta$ is finite or $E_\alpha\setminus B_\beta$ is infinite.
\item\label{cond:selective2} there is $\beta\le \alpha+1$ so that either
	\begin{itemize}
		\item for all $k<\omega$, $\{i: (A_\beta\setminus k)\cap \varphi_\alpha(i) = \emptyset \}$ is finite, or
		\item there is $k<\omega$ so that $\{i:(B_\beta\setminus k)\cap \varphi_\alpha(i)=\emptyset\}$ is infinite.
	\end{itemize}
\item\label{cond:killselection} $\{i:s_\alpha(i)\in B_{\alpha+1}\}\cap U_{\alpha+1}=\emptyset$.
\end{enumerate}

Conditions (\ref{cond:tightness2}) and (\ref{cond:tightnessb2}) ensure that the gap we construct at the end is tight. Condition (\ref{cond:selective2}) ensures that it is $L$-selective. In the first case, let $$k_i=\max\{k: (A_\beta\setminus k)\cap \varphi_\alpha(i) \neq \emptyset\},$$ and $i\mapsto \min((A_\beta\setminus k) \cap \varphi_\alpha(i))$ gives a continuous selection of $\varphi_\alpha$ defined on a cofinite set. In the second case, $B_\beta$ witnesses that the extension of $\varphi_\alpha$ to $\omega+1$ is not lower semicontinuous. Condition (\ref{cond:killselection}) ensures that $s_\alpha$ is not a continuous selection for $\psi$.

Now suppose we are at stage $\alpha+1$. Let us take care of conditions (\ref{cond:tightness2})--(\ref{cond:killselection}). There are two cases corresponding to the different options in (\ref{cond:selective2}).

%By (\ref{cond:Blarge}) on $B_\alpha$, condition (\ref{cond:killselection}) is easy to arrange by subtracting off the range of the selection $s_\alpha$ outside of $A_\alpha$; furthermore, (1)--(3) are maintained. So we focus on  (\ref{cond:selective2}).

%If already there is $k<\omega$ so that $\{i:(B_\alpha\setminus k)\cap \varphi_\alpha(i)=\emptyset\}$ is infinite, or for all $k<\omega$ $\{i: (A_\alpha\setminus k)\cap \varphi_\alpha(i) = \emptyset \}$, then condition (\ref{cond:selective2}) is already satisfied. So we may assume that for all $k<\omega$, $H_k=\{i:(B_\alpha\setminus k)\cap \varphi_\alpha(i)=\emptyset\}$ is finite and that there is $k^*<\omega$ so that $K=\{i: (A_\alpha\setminus k^*)\cap \varphi_\alpha(i) = \emptyset \}$ is infinite. In other words, $\varphi_\alpha$ (or more precisely, $\varphi_\alpha\cup \{(\infty,\infty)\}$) is $L$-lower semicontinuous in the topology constructed so far from $B_\alpha$, and has no selection which is almost contained in $A_\alpha$.
%

\emph{Case 1}: There is an infinite set $Y\subseteq \omega$ so that
$$\sup\{|(B_\alpha\setminus\bigcup_{i\in Y} \varphi_\alpha(i))\cap \psi(n)|:n\in U_\alpha\}=\omega.$$

In this case, let $B'=B_\alpha\setminus\bigcup_{i\in Y} \varphi_\alpha(i)$. Let $\langle n_k:k<\omega\rangle$ be a sequence of distinct natural numbers such that $n_k\in U_\alpha$ and $|B'\cap \psi(n_k)|>k$, and let $U'=\{n_k:k<\omega\}.$

Let $Z_0=\emptyset$ if $B_\alpha\cap E_\alpha$ is finite, and otherwise let $Z_0$ be an infinite subset of $B_\alpha\cap E_\alpha$ so that $U'\setminus\psi^{-1}[Z_0]$ is infinite. Let $U''=U'\setminus\psi^{-1}[Z_0]$. Let $Z_1=\emptyset$ if $E_\alpha\setminus A_\alpha$ is finite, and otherwise let $Z_1$ be an infinite subset of $E_\alpha\setminus A_\alpha$ so that $U''\setminus\psi^{-1}[Z_1]$ is infinite. 

Take 
$$A_{\alpha+1}=A_\alpha\cup Z_0,$$
$$B_{\alpha+1}=A_\alpha\cup \left(B' \setminus (s_\alpha[\omega]\cup Z_1)\right)$$
and
$$U_{\alpha+1}=U''\setminus \psi^{-1}[Z_1].$$

By the choice of $Z_0$, (\ref{cond:Asmall}) holds. We check that (\ref{cond:Blarge}) holds. By the choice of $U_{\alpha+1}$, $U_{\alpha+1}\subseteq \{n:Z_1\cap \psi(n)=\emptyset\}$. Since $s_\alpha$ is a selection for $\psi$, for any $k$ we have
\begin{align*}\{n: |B_{\alpha+1} \cap \psi(n)|>k\}&\supseteq \{n\in U_{\alpha+1}: |B_{\alpha} \cap \psi(n)|>k+1\}\\
&=\{n_\ell\in U_{\alpha+1}:\ell>k+1\}\\
&\supseteq^* U_{\alpha+1}.
\end{align*}

Now (\ref{cond:tightness2}), (\ref{cond:tightnessb2}), (\ref{cond:selective2}), hold by the choices of $Z_0$, $Z_1$, and $B'$, respectively.  Finally, (\ref{cond:killselection}) holds since $s_\alpha[\omega]$ was subtracted off in the definition of $B_{\alpha+1}$ and so $\{n:s_\alpha(n)\in B_{\alpha+1}\}\cap U_{\alpha+1}\subseteq \{n\in U_\alpha:A_\alpha\cap\psi(n)\neq \emptyset\}=\emptyset$ by (\ref{cond:Asmall}) at $\alpha$.

\emph{Case 2}: For every infinite set $Y\subseteq \omega$, 
$$\sup\{|(B_\alpha\setminus\bigcup_{i\in Y} \varphi_\alpha(i))\cap \psi(n)|:n\in U_\alpha\}<\omega.$$

Let $U'\subset U_\alpha$ be such that $|U'|=|U_\alpha\setminus U'|=\aleph_0$. Then for any $k<\omega$, the set $$\{i:\varphi_\alpha(i)\cap(B_\alpha\setminus k)\subseteq \bigcup_{n\in U_\alpha\setminus U'} \psi(n)\}$$ is finite, otherwise there is some $k$ so that $Y=\{i:\varphi_\alpha(i)\cap(B_\alpha\setminus k)\subseteq \bigcup_{n\in U_\alpha\setminus U'} \psi(n)\}$ is infinite. But then $$\bigcup_{i\in Y} \varphi_\alpha(i)\cap B_\alpha\subseteq k\cup  \bigcup_{n\in U_\alpha\setminus U'} \psi(n),$$ so $B_\alpha\setminus\bigcup_{i\in Y} \varphi_\alpha(i)$ contains $\psi(n)\cap(B_\alpha \setminus k)$ for all $n\in U'$. Therefore $$\sup\{|(B_\alpha\setminus\bigcup_{i\in Y} \varphi_\alpha(i))\cap \psi(n)|:n\in U_\alpha\}=\omega,$$ so we would have been in Case 1.

Let $A'=A_\alpha\cup (B_\alpha\cap \bigcup_{n\in U_\alpha\setminus U'} \psi(n))$. Define $Z_0$, $Z_1$, and $U''$ in exactly the same way as the previous case, except using the new definition of $U'$. 

Take 
$$A_{\alpha+1}=A'\cup Z_0,$$
$$B_{\alpha+1}=A_\alpha\cup \left(B_\alpha \setminus (s_\alpha[\omega]\cup Z_1)\right)$$
and
$$U_{\alpha+1}=U''\setminus \psi^{-1}[Z_1].$$

Similarly as in the previous case, all of our conditions are satisfied for these choices.
\end{proof}
%
%\begin{remark}
%Assume $\mathsf{CH}$ and let $U$ be any $P$-filter. Using a similar but easier argument, we can construct a tight gap which is not $U$-selective. This can be used to construct an example of a countable Fr\'echet $\alpha_1$-space which is not $L$-selective.
%\end{remark}

\section{$\mathrm{CFC}$, Fr\'echet, and not $L$-selective}\label{cfcfnl}

In \cite{FGS}, the question of whether $\mathrm{CFC}$ Fr\'echet spaces are $L$-selective is posed. There, they use $\mathfrak{p}=\omega_1$ to construct an example of a $\mathrm{CFC}$ Fr\'echet space which is not $L$-selective, but the question of whether there is a $\mathsf{ZFC}$ example is left open. However, a slight modification of their example answers the question.

Let $\langle A_\alpha:\alpha<\mathfrak{p}\rangle$ be a tower. Define a space $X$ on $(\omega\times\mathfrak{p})\cup\{\infty\}$, where all points of $\omega\times\mathfrak{p}$ are isolated, and a local subbase at $\infty$ is generated by the following sets:
\begin{itemize}
\item $X\setminus  (\{n\}\times \mathfrak{p})$,
\item $X\setminus (A_\alpha\times\alpha)$,
\item $X\setminus (B\times \alpha)$, where $\cf(\alpha)>\omega$ and $B$ is a pseudo-intersection of $\{A_\beta:\beta<\alpha\}$. 
\end{itemize}

\begin{theorem}
$X$ is a $\mathrm{CFC}$, Fr\'echet, and not $L$-selective space.
\end{theorem}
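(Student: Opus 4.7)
The plan is to verify the three properties in turn, reusing the template of Theorem~\ref{lsnotus}; the failure of $L$-selectivity and the CFC property go through by direct adaptation, while the Fr\'echet property is the delicate step.

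For the failure of $L$-selectivity, I will use $\varphi(n)=\{n\}\times\mathfrak{p}$ and $\varphi(\infty)=\{\infty\}$, just as in the non-$U$-selectivity witness from Theorem~\ref{lsnotus}. Each $\varphi(n)$ is closed, because its complement contains the subbasic neighborhood $X\setminus(\{n\}\times\mathfrak{p})$ of $\infty$, and lower semicontinuity holds because any basic neighborhood of $\infty$ misses only finitely many spines. Given a selection $s$ with $s(n)=(n,\alpha_n)$, the regularity of $\mathfrak{p}=\mathfrak{t}$ yields $\gamma:=\sup_n\alpha_n+1<\mathfrak{p}$; then $W=X\setminus(A_\gamma\times\gamma)$ is a neighborhood of $\infty$ with $s^{-1}(W)=\omega\setminus A_\gamma$ coinfinite, contradicting continuity at $\infty$.

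For CFC, I will mimic Claim~\ref{cfc}. Given a countable $A\subseteq X$, fix a countable ordinal $\gamma$ with $\pi_1[A]\subseteq\gamma$ and show that the subbasics of the first type, together with those of the second and third types of ordinal index at most $\gamma$, restrict to a subbase at $\infty$ in $A$. The key observation is that for $\alpha>\gamma$ the tower property gives $A_\alpha\subseteq A_\gamma\cup F$ for a finite $F\subseteq\omega$, so $A\cap(A_\alpha\times\alpha)=A\cap(A_\alpha\times\gamma)\subseteq A\cap\bigl((A_\gamma\times\gamma)\cup(F\times\mathfrak{p})\bigr)$; the third-type subbasics $X\setminus(B\times\alpha)$ are handled identically using $B\subseteq^*A_\gamma$ when $\gamma<\alpha$.

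For Fr\'echet I will follow the end of the proof of Theorem~\ref{lsnotus}. Given $\infty\in\overline{A}$, set $\pi_u(A)=\{n:A\cap(\{n\}\times\mathfrak{p})\text{ is uncountable}\}$; the set $A\cap(\pi_u(A)^c\times\mathfrak{p})$ is a countable union of countable sets and therefore countable. If $\infty$ lies in its closure, CFC delivers a converging sequence; otherwise, passing to $A\cap U$ for a witnessing basic neighborhood $U$, we may assume every nonempty fiber of $A$ is uncountable. Then $\pi_0[A]$ cannot be a pseudo-intersection of $\{A_\alpha\}$, so there is a least $\beta_*$ with $\pi_0[A]\setminus A_{\beta_*}$ infinite; I will pick $\beta_0\ge\beta_*$ with $\cf(\beta_0)=\omega$, enumerate $\pi_0[A]\setminus A_{\beta_0}$ as $\{n_k\}$, and choose $\alpha_k\in\pi_1[A\cap(\{n_k\}\times\mathfrak{p})]$ cofinal in $\beta_0$. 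Convergence of $(n_k,\alpha_k)$ to $\infty$ is then verified subbasic-by-subbasic: tower steps $A_\beta\times\beta$ with $\beta>\beta_0$ are avoided because $n_k\notin A_\beta$ for all but finitely many $k$ (since $A_\beta\subseteq^*A_{\beta_0}$), those with $\beta\le\beta_0$ because $\alpha_k\ge\beta$ eventually, and the pseudo-intersection subbasics similarly. The main obstacle is this last step: matching the cofinalities of $\beta_0$, $\sup\pi_1[A]$, and the individual fibers of $A$ requires a careful case analysis, and the third subbasic family is exactly what makes the limit-of-uncountable-cofinality cases work.
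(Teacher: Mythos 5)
Your witness for non-$L$-selectivity is correct and is the paper's own argument (the paper's ``$\alpha<\omega$'' is a typo for $\alpha<\mathfrak{p}$, and your appeal to the uncountable cofinality of $\mathfrak{p}$ is the right justification for bounding the range of the selection). The CFC argument, however, has a gap when $\mathfrak{p}>\omega_1$: you cannot ``fix a countable ordinal $\gamma$ with $\pi_1[A]\subseteq\gamma$,'' since a countable $A$ may have $\pi_1[A]$ unbounded in $\omega_1$ or containing ordinals above it; the least admissible $\gamma$ is merely an ordinal below $\mathfrak{p}$ of countable cofinality. With that reading, your proposed local base --- all subbasics of the second and third types with index at most $\gamma$ --- is uncountable and so does not witness first countability. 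Your reduction (replace index $\alpha>\gamma$ by $\gamma$ using $A_\alpha\subseteq^* A_\gamma$ and $A\cap(\omega\times\alpha)=A\cap(\omega\times\gamma)$) is exactly the right move, but it must be applied at \emph{every} index, not only above $\gamma$: the paper lets $a$ be the set of successors of elements of $\pi_1[A]$, uses only the countably many subbasics indexed by the countable closed set $\overline{a}$, and reduces an arbitrary index $\beta$ to $\max(\overline{a}\cap(\beta+1))$, noting that each member of $\overline{a}$ is a successor or of countable cofinality so that the third-type sets always reduce to a strictly smaller index where $B\subseteq^* A_\gamma$ is available.

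The Fr\'echet argument has a genuine gap that you flag yourself, and it is not a routine case analysis. Having chosen $\beta_0\ge\beta_*$ with $\mathrm{cf}(\beta_0)=\omega$ and enumerated $\pi_0[A]\setminus A_{\beta_0}$ as $\{n_k\}$, there is no reason $A$ should contain points $(n_k,\alpha_k)$ with $\alpha_k$ cofinal in $\beta_0$: the fibers of $A$, though uncountable, may all be bounded far below $\beta_0$ or lie entirely above it (an uncountable subset of $\mathfrak{p}$ need not meet $[\eta,\beta_0)$ for any large $\eta<\beta_0$). Your preliminary reduction to ``every nonempty fiber is uncountable'' is also not stable under intersecting with a basic neighborhood, since a closed set $B\times\alpha$ can delete an uncountable but bounded fiber entirely. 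The paper proceeds differently: take $\alpha=\sup\pi_1[A]$ minimal with $\infty\in\overline{A\cap(\omega\times\alpha)}$ and split on $\mathrm{cf}(\alpha)$. If $\mathrm{cf}(\alpha)=\omega$, minimality lets one choose $x_n\in A$ with $\pi_0(x_n)\notin n\cup A_\alpha$ and $\pi_1(x_n)\ge\alpha_n$, and this sequence converges; if $\mathrm{cf}(\alpha)>\omega$, a pressing-down (Fodor) argument produces a single level $\gamma^*$ below which the obstructions concentrate, after which $\pi_0[A]$ is forced into a pseudo-intersection $B^*$ of $\{A_\xi:\xi<\alpha\}$, so $A\subseteq B^*\times\alpha$, a third-type closed set --- a contradiction. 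You correctly sensed that the third subbasic family is what makes the uncountable-cofinality case work, but the Fodor step is the missing idea, and climbing a single $\omega$-cofinal level $\beta_0$ does not substitute for it.
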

\begin{proof}
Let $\varphi:\omega+1\rightarrow X$ be defined by $\varphi(\omega)=\infty$ and $\varphi(n)=\{n\}\times \mathfrak{p}$. Then $\varphi$ is lower semicontinuous, since the neighborhood filter at $\infty$ is contained in the dual filter of the ideal $\mathrm{Fin}\times\mathrm{Bounded}$, where $\mathrm{Fin}$ is the ideal of finite subsets of $\omega$ and $\mathrm{Bounded}$ is the ideal of bounded subsets of $\mathfrak{p}$. Suppose $s:\omega+1\rightarrow X$ is a selection for $\varphi$. Then there is $\alpha<\omega$ so that $\mathrm{ran}(s\rest\omega)\subseteq \omega\times\alpha$. But $s^{-1}(X\setminus (A_\alpha\times\alpha))=\omega\setminus A_\alpha$ is co-infinite, so $s$ is not continuous. Therefore, $X$ is not $L$-selective.

\begin{claim}
Every countable subspace of $X$ is first countable.
\end{claim}
\begin{proofofclaim}
Suppose $A\subseteq X$ is countable, and to avoid trivialities we may assume that $\infty\in A$. Let $a$ be the set of successors of elements of $\pi_1[A]$. We check that the sets $A\setminus (\{n\}\times \mathfrak{p})$ and $A\setminus (A_\alpha \times\alpha)$, where $\alpha\in \overline{a}$ (closure in the order topology on $\kappa$), form a neighborhood basis for $\infty$ in $A$. 

To see this, we check that the intersection of $A$ with any of the subbasic open neighborhoods of $\infty$ contains a finite intersection of sets of the form $A\setminus (\{n\}\times \mathfrak{p})$ and $A\setminus (A_\alpha \times\alpha)$, $\alpha\in \overline{a}$. The relevant cases are the neighborhoods $V$ of the form $X\setminus (A_\beta\times\beta)$ and $X\setminus (B\times \beta)$, $\beta\ge \min(a)$, since if $\beta<\min(a)$ then $A\cap V=A$. 

For a neighborhood $X\setminus (A_\beta\times\beta)$, let $\gamma\in\overline{a}$ be maximum so that $\gamma\le \beta$. Then $A_\beta\subseteq^* A_\gamma$ and $A\cap (\omega\times\beta)=A\cap(\omega\times\gamma)$, so $A\setminus (A_\beta\times\beta)$ contains the intersection of $A\setminus (A_\gamma\times \gamma)$ with finitely many sets of the form  $A\setminus (\{n\}\times \mathfrak{p})$. 

Now consider a neighborhood $X\setminus (B\times \beta)$, where  $\cf(\beta)>\omega$ and $B$ is a pseudo-intersection of $\{A_\gamma:\gamma<\beta\}$. Let $\gamma\in\overline{a}$ be maximum so that $\gamma\le \beta$. Since $A$ is countable, $\overline{a}$ consists of ordinals which are either successors or have countable cofinality, so $\gamma<\beta$. By assumption, $B\subseteq^* A_\gamma$, so $A\setminus (B\times \beta)$ contains the intersection of $A\setminus (A_\gamma\times \gamma)$ with finitely many sets of the form $A\setminus (\{n\}\times \mathfrak{p})$. 
\end{proofofclaim}

\begin{claim}
$X$ is Fr\'echet.
\end{claim}
\begin{proofofclaim}
Suppose $\infty\in\overline{A}$. By the previous claim, it suffices to find a countable subset of $A$ which accumulates to $\infty$. We may assume that $\pi_1[A]$ has no maximum element and let $\alpha=\sup\pi_1[A]$. We may further assume that for every $\beta<\alpha$, $\infty\not\in \overline{A\cap(\omega\times \beta)}$.

If $\cf(\alpha)=\omega$, let $\langle \alpha_n:n<\omega\rangle$ be a sequence cofinal in $\alpha$. For each $n$, there must be some $x_n\in A$ such that $\pi_0(x_n)\not\in n\cup A_\alpha$ and $\pi_1(x_n)\ge \alpha_n$, since $\infty\not\in \overline{A\cap(\omega\times \alpha_n)}$. It can be easily checked that $\{x_n:n<\omega\}$ has finite intersection with any set of the form $\{n\}\times \mathfrak{p}$ or $B\times\beta$ for $\beta<\alpha$. And if $\beta\ge\alpha$, the closed sets of the form $B\times \alpha$ have $B\subseteq^* A_\alpha$, so again $B\times \alpha$ and $\{x_n:n<\omega\}$ have finite intersection, and we conclude that $\{x_n:n<\omega\}$ converges to $\infty$.

If $\cf(\alpha)>\omega$, then for every $\beta<\alpha$, $A\cap(\omega\times \beta)$ is covered by the union of $B\times \beta$ (where either $B=^* A_\alpha$ or $B$ is a pseudo-intersection of $\{A_\delta:\delta<\beta\}$ so that $A_\beta\subseteq B$) together with finitely many basic closed sets of the form $B_\gamma\times \gamma$, $\gamma<\beta$. Let $\gamma(\beta)$ be the maximum of the finitely many $\gamma$ which appear in this union. By Fodor's lemma, there is an unbounded set $S\subseteq \alpha$ so that $\gamma\rest S$ is constant, say with value $\gamma^*$. Since $\infty\not\in \overline{A\cap (\omega\times\gamma^*)}$, we may remove $\omega\times\gamma^*$ from $A$ and assume that for every $\beta<\alpha$, there are $\delta\in S$ and $B$ a pseudo-intersection of $\{A_\xi:\xi<\delta\}$ so that $A\cap (\omega\times\beta)\subseteq B\times\delta$.

By removing $\omega\times\alpha^*$, where $\alpha^*$ is the supremum of the ordinals $\sup\{\xi:(n,\xi)\in A\}$ which are less than $\alpha$, we may assume that for every $n\in \pi_0[A]$, $\{\xi:(n,\xi)\in A\}$ is unbounded in $\alpha$. By the work of the previous paragraph, $\pi_0[A]$ is contained in a pseudo-intersection $B^*$ of $\{A_\xi:\xi<\alpha\}$. Therefore, $A$ is contained in the closed set $B^*\times \alpha$,  a contradiction.
\end{proofofclaim}

\end{proof}

The space constructed above is of cardinality and character $\mathfrak{p}$. We can also construct an example of size $\aleph_1$, even if $\mathfrak{p}>\omega_1$.

\begin{theorem}
There is a Fr\'echet $\mathrm{CFC}$ space which is not $L$-selective with cardinality $\aleph_1$.
\end{theorem}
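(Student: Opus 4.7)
The plan is to follow the template of the preceding proof, replacing the tower $\langle A_\alpha : \alpha < \mathfrak{p}\rangle$ by a family indexed by an Aronszajn tree, since such a tree has size $\aleph_1$ in $\mathsf{ZFC}$ regardless of the value of $\mathfrak{p}$. Fix an Aronszajn tree $T$ of height $\omega_1$ with countable levels $T_\alpha$, and by a transfinite recursion along $T$ assign to each $t \in T$ an infinite $A_t \subseteq \omega$ with $A_t \subseteq^* A_s$ whenever $s <_T t$; this is possible because each chain in $T$ below $t$ is countable and so admits an infinite pseudo-intersection. Take the underlying set of $X$ to be $(\omega \times T) \cup \{\infty\}$, with all points of $\omega \times T$ isolated; then $|X| = \aleph_1$.

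The neighborhood subbase at $\infty$ will consist of the complements of the closed sets $\{n\} \times T$ for $n < \omega$, $A_t \times T_{\le t}$ for $t \in T$, and $B \times T_{\le b}$ for countable branches $b : \alpha \to T$ with $\cf(\alpha) > \omega$ and $B$ a pseudo-intersection of $\{A_{b(\xi)} : \xi < \alpha\}$, where $T_{\le t} = \{s \in T : s \le_T t\}$ and $T_{\le b} = \bigcup_{\xi < \alpha} T_{\le b(\xi)}$. Non-$L$-selectivity uses the same witness as before: the map $\varphi(n) = \{n\} \times T$, $\varphi(\omega) = \infty$ is lower semicontinuous, and any selection $s$ has its second coordinate contained in a countable subset of $T$, hence bounded below some level $\alpha < \omega_1$; for any $t \in T_\alpha$ the preimage $s^{-1}(X \setminus (A_t \times T_{\le t}))$ then equals $\omega \setminus A_t$, which is coinfinite, violating continuity. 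CFC transfers from the tower case since any countable $A \subseteq X$ has $\pi_1[A]$ contained in a bounded portion of $T$, and below any fixed countable height $T$ is itself countable, so the restricted family $\{A_t\}$ is countable and generates a countable local base at $\infty$.

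The main obstacle is Fr\'echet when $\pi_1[A]$ is cofinal in $\omega_1$. The tower proof handled this by taking a pseudo-intersection of the tower along $\sup \pi_1[A]$ and applying Fodor's lemma to produce a basic closed rectangle containing $A$, contradicting $\infty \in \overline{A}$. In the tree version no cofinal branch of $T$ exists, so I would instead assign to each $(n,t) \in A$ a node in $T$ extending $t$ chosen via the coherence of the tree, and apply Fodor's lemma to the resulting regressive function on the cofinal set of heights to find a stationary set on which the choice stabilizes at a single node $t^*$. The closed set $A_{t^*} \times T_{\le t^*}$ together with a diagonal pseudo-intersection at a limit level should then contain cofinitely many points of the reduced $A$, contradicting $\infty \in \overline{A}$. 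Making this pressing-down argument go through with the prescribed subbase, while ensuring that it does not inadvertently require an uncountable branch of $T$, is the principal technical hurdle.
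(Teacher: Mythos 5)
There is a genuine gap, and it is fatal to the construction rather than only to the write-up. Your columns are indexed by nodes of $T$, and your nontrivial closed sets $A_t\times T_{\le t}$ and $B\times T_{\le b}$ are rectangles whose second factor is a chain (the downward closure of a node or of a countable branch). A selection for your $\varphi$ has the form $s(n)=(n,t_n)$, and nothing forces the $t_n$ to lie on one chain: if you choose $\{t_n:n<\omega\}$ to be an antichain of $T$, then each $T_{\le t}$ and each $T_{\le b}$ meets $\{t_n:n<\omega\}$ in at most one point, so the complement of every subbasic closed set has cofinite $s$-preimage and $s$ is a continuous selection. Your identity $s^{-1}(X\setminus(A_t\times T_{\le t}))=\omega\setminus A_t$ is false --- it would require $t_n\le_T t$ for all $n\in A_t$, not merely that the $t_n$ sit below level $\alpha$ --- and in fact the exhibited $\varphi$ \emph{does} admit a continuous selection in your space, so non-$L$-selectivity fails. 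The Fr\'echet step is also not a proof: a general Aronszajn tree has no ``coherence'' to invoke, the regressive function to which Fodor's lemma is to be applied is never defined, and you acknowledge the step as an unresolved hurdle.

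The paper's construction sidesteps exactly this problem by keeping the underlying set $(\omega\times\omega_1)\cup\{\infty\}$ and moving the tree into the \emph{first} coordinate: one builds an order-reversing $\sigma:T\to[\omega]^\omega$ with each level $\{\sigma(t):t\in T_\alpha\}$ pairwise almost disjoint, lets $I_\alpha$ be the ideal generated by $\{\sigma(t):t\in T_\alpha\}$, and takes as closed sets $B\times\alpha$ for $B\in I_\alpha$ (together with the columns $n\times\omega_1$). The Aronszajn property is used exactly once, to show that every infinite $B\subseteq\omega$ escapes some $I_\alpha$: otherwise $\{t\in T:\sigma(t)\cap B\ \text{is infinite}\}$ is a subtree of height $\omega_1$ with finite levels, hence has a cofinal branch. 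This escape property is what kills every bounded selection (so the space is not $L$-selective) and also drives the countable-tightness argument for the Fr\'echet property. To salvage your approach you would need closed sets lying over entire levels of $T$ rather than over chains, which is precisely what the ideals $I_\alpha$ accomplish.
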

\begin{proof}
Let $T$ be an Aronszajn tree. Define an order-preserving map $\sigma$ from $(T,<)$ into $([\omega]^{\omega},\supseteq^*)$ by induction on the level, so that on each level of $T$ the image of $\sigma$ consists of pairwise almost-disjoint sets.

More explicitly, let $\sigma(\emptyset)=\omega$ and assume for $\alpha<\omega_1$ that $\sigma$ is defined on all levels $<\alpha$. If $\alpha=\beta+1$ is successor, then for each $s\in T_\beta$, partition $\sigma(s)$ into pairwise disjoint infinite sets $\{ a_{s,i}:i<\omega\}$ and enumerate the set of $t>s$ on level $\alpha$ as $\{t_i:i<i^*\le\omega\}$. Then put $\sigma(t_i)=a_{s,i}$. If $\alpha$ is limit, then for each $t\in T_\alpha$, let $\sigma(t)$ be a pseudo-intersection of $\{\sigma(s):s<t\}$, so that $\sigma(t)\subseteq^* \sigma(s)$ for all $s<t$. 

Let $I_\alpha$ be the ideal on $\omega$ generated by $\{\sigma(t):t\in T_\alpha\}$, i.e., $a\in I_\alpha$ if and only if $a$ is contained in a finite union of sets of the form $\sigma(t), t\in T_\alpha$.

Notice that the sequence $\{I_\alpha:\alpha<\omega_1\}$ is decreasing. In addition, we have:
\begin{claim}
For every infinite set $B$, there is some $\alpha<\omega_1$ so that $B\not\in I_\alpha$.
\end{claim}
\begin{proofofclaim}
Assume towards a contradiction that there is a set $B$ so that $B\in I_\alpha$ for every $\alpha<\omega_1$. Let $T_B=\{t\in T: \sigma(t)\cap B \textrm{ is infinite}\}$. $T_B$ is a subtree of $T$, since if $t\in T_B$ and $s<t$, then $\sigma(s)\supseteq^*\sigma(t)$ and $\sigma(s)\cap B$ is infinite, so $s\in T_B$. The levels of $T_B$ are finite, since $B\in I_\alpha$ and hence $B$ is covered by a finite union of sets from the pairwise almost-disjoint family $\{\sigma(t):t\in T_\alpha\}$. $T_B$ has height $\omega_1$.

Since $T_B$ is a subset of the Aronszajn tree $T$, $T_B$ has no cofinal branch. But any tree with finite levels and height $\omega_1$ must have a cofinal branch, a contradiction (this fact can be seen by taking a uniform ultrafilter $U$ on $T_B$, so $\{s\in T_B: \{t\in T_B: s<t \}\in U\}$ determines a cofinal branch).
\end{proofofclaim}

Now we can define a space $X_T$ whose underlying set is $\{\infty\}\cup(\omega\times\omega_1)$ so that points of $\omega\times\omega_1$ are isolated and the neighborhood filter for $\infty$ is generated by sets of the form:
\begin{enumerate}
\item $X_T\setminus (n\times \omega_1)$, where $n<\omega$,
\item $X_T\setminus (B\times \alpha)$, where $\alpha<\omega_1$ and $B\in I_\alpha$.
\end{enumerate}

$X_T$ is not $L$-selective, as witnessed by the lower semicontinuous function $\varphi:\omega+1\rightarrow \mathcal{F}(X_T)$ where $\varphi(\omega)=\infty$ and $\varphi(n)=\{n\}\times \omega_1$. Any selection has countable range bounded by some $\alpha<\omega_1$, and $I_\alpha$ contains an infinite set, so there are no continuous selections.

It is clear that $X_T$ is $\mathrm{CFC}$. It remains to check that it is Fr\'echet, and since $X_T$ is $\mathrm{CFC}$, it suffices to show that $X_T$ has countable tightness.

Suppose $\infty\in\overline{A}$. We will show that $\infty\in\overline{A'}$ for some countable $A'\subseteq A$. Let $\pi_0:X_T\rightarrow \omega$ and $\pi_1:X_T\rightarrow \omega_1$ denote the first and second projection, respectively. We may assume that the set $A_u=\{n:A\cap (\{n\}\times\omega_1) \textrm{ is uncountable}\}$ is equal to $\pi_0[A]$, since either $$\infty \in \overline{A\cap (A_u\times\omega_1)}$$ or $$\infty \in \overline{A\cap ((\omega\setminus A_u)\times\omega_1)},$$ and in the second case we are already done.

By the earlier claim, there is some $\alpha<\omega_1$ so that $\pi_0[A] \not\in I_\alpha$. By increasing $\alpha$, we can take $\alpha$ to be a limit point of $\pi_1[A\cap (\{n\}\times \omega_1)]$ for all $n\in \pi_0[A]$. Then $\infty \in \overline{A\cap (\omega\times\alpha)}$, since any basic open neighborhood of $\infty$ either contains $\omega\times\beta$ for some $\beta<\alpha$ or $B\times\alpha$ for some $B\subseteq \pi_0[A]$ which is positive for $I_\alpha$.
\end{proof}

\bibliography{selective}{}

\begin{thebibliography}{1}

\bibitem{CG}
David Chodounsk\'{y} and Osvaldo Guzm\'{a}n.
\newblock There are no {P}-points in {S}ilver extensions.
\newblock {\em Israel J. Math.}, 232(2):759--773, 2019.

\bibitem{DS}
Alan Dow and Juris Stepr\={a}ns.
\newblock A model in which countable {F}r\'{e}chet {$\alpha_1$}-spaces are
  first countable.
\newblock {\em Math. Proc. Cambridge Philos. Soc.}, 105(3):473--480, 1989.

\bibitem{FGS}
Ziqin Feng, Gary Gruenhage, and Rongxin Shen.
\newblock Continuous selections of lower semicontinuous set-valued mappings.
\newblock {\em Amer. Math. Monthly}, 63(4):233--238, 1956.

\bibitem{M1}
E.~Michael.
\newblock Selected {S}election {T}heorems.
\newblock {\em Amer. Math. Monthly}, 63(4):233--238, 1956.

\bibitem{N}
Peter Nyikos.
\newblock The {C}antor tree and the {F}r\'{e}chet-{U}rysohn property.
\newblock In {\em Papers on general topology and related category theory and
  topological algebra ({N}ew {Y}ork, 1985/1987)}, volume 552 of {\em Ann. New
  York Acad. Sci.}, pages 109--123. New York Acad. Sci., New York, 1989.

\end{thebibliography}
\bibliographystyle{plain}

\end{document}